\newtheorem{theorem}{Theorem}[section]
\newtheorem*{theorem*}{Theorem}
\newtheorem{proposition}[theorem]{Proposition}
\newtheorem{lemma}[theorem]{Lemma}
\newtheorem{corollary}[theorem]{Corollary}
 \theoremstyle{definition}
\newtheorem{remark}[theorem]{Remark}
\renewcommand{\(}{\bigl(} \renewcommand{\)}{\bigr)}
\newcommand{\tens}{\otimes}
\newcommand{\leftexp}[2]{{\vphantom{#2}}^{#1}{#2}}
\newcommand{\id}{\mathrm{id}}
\newcommand{\op}{^{\mathrm{op}}}
\newcommand{\ra}{\rightarrow}
\newcommand{\xra}{\xrightarrow}
\newcommand{\CH}{\operatorname{CH}}
\renewcommand{\Im}{\operatorname{Im}}
\newcommand{\Ker}{\operatorname{Ker}}
\newcommand{\Pic}{\operatorname{Pic}}
\newcommand{\ind}{\operatorname{ind}}
\newcommand{\Int}{\operatorname{Int}}
\newcommand{\res}{\operatorname{res}}
\newcommand{\cor}{\operatorname{cor}}
\newcommand{\Br}{\operatorname{Br}}
\newcommand{\Spec}{\operatorname{Spec}}
\newcommand{\Gal}{\operatorname{Gal}}
\newcommand{\gGL}{\operatorname{\mathbf{GL}}}
\newcommand{\gPGL}{\operatorname{\mathbf{PGL}}}
\newcommand{\End}{\operatorname{End}}
\newcommand{\Hom}{\operatorname{Hom}}
\newcommand{\Mor}{\operatorname{Mor}}
\newcommand{\Aut}{\operatorname{Aut}}
\newcommand{\spann}{\operatorname{span}}
\newcommand{\Sym}{\operatorname{Sym}}
\newcommand{\red}{\operatorname{red}}
\newcommand{\Tr}{\operatorname{Tr}}
\newcommand{\Trd}{\operatorname{Trd}}
\newcommand{\GCD}{\operatorname{g.c.d.}}
\newcommand{\NK}{\operatorname{N_{K/F}(K^\times)}}
\newcommand{\NKL}{\operatorname{N_{KL/L}\((KL)^\times\)}}
\newcommand{\rank}{\operatorname{rank}}
\renewcommand{\P}{\mathbb{P}}
\newcommand{\Z}{\mathbb{Z}}
\newcommand{\gm}{\mathbb{G}_m}
\newcommand{\cC}{\mathcal C}
\newcommand{\cI}{\mathcal I}
\newcommand{\cJ}{\mathcal J}
\newcommand{\cO}{\mathcal O}
\newcommand{\cL}{\mathcal L}
\newcommand{\cP}{\mathcal P}
\title{Del Pezzo Surfaces of degree 6 over an Arbitrary Field}
\author{Mark Blunk}
\date{}
\address{Department of Mathematics, University of California,
        Los Angeles, CA 90095-1555} \email {mblunk@math.ucla.edu}
\begin{document}
\begin{abstract}
We give a characterization of all del Pezzo surfaces of degree 6
over an arbitrary field $F$. A surface is determined by a pair of
separable algebras. These algebras are used to compute the Quillen
$K$-theory of the surface. As a consequence, we obtain an index
reduction formula for the function field of the surface.
\end{abstract}
\maketitle
\section{Introduction}
If $X$ is an algebraic variety defined over an arbitrary field
$F$, a common method (cf. the introduction of \cite{Vos98}) for
learning various properties of $X$ is to first study $\overline{X}
:= X \times_{\Spec F} \Spec (\overline{F})$, the extension of
scalars of $X$ to a separable closure $\overline{F}$ of $F$, and
then to study the action of the Galois group
$\Gal(\overline{F}/F)$ on algebraic groups and other algebraic
objects associated to $\overline{X}$. This is particularly useful
when dealing with a class of varieties that all become isomorphic
over $\overline{F}$, e.g. Severi-Brauer varieties or involution
varieties. A Severi-Brauer variety is determined by a central
simple $F$-algebra $A$, and an involution variety is determined by
a central simple $F$-algebra $A$ with an orthogonal involution of
the first kind $(A, \sigma)$. In either case this algebraic data
determines geometrical and topological information about the
corresponding variety.  In particular the Quillen $K$-groups of
the variety are determined the algebra in the Severi-Brauer
example, and the algebra with involution in the involution variety
example. This was proved for Severi-Brauer varieties and
involution varieties in \cite{Qui72} and \cite{Tao94},
respectively. Panin proved in \cite{Pan94} a more general theorem
computing the $K$-theory of projective homogeneous varieties,
which contains both examples as special cases. In all of these
examples, as in this paper, the action of algebraic groups plays a
significant role. An immediate consequence of this computation of
the $K$-theory is an index reduction formula, which determines how
extending scalars of a division $F$-algebra to the function field
of the variety reduces the index of the algebra. In this paper we
will study del Pezzo surfaces of degree 6 over $F$, obtaining
similar results.

A del Pezzo surface $S$ is a smooth projective surface over a
field $F$ such that the anti-canonical bundle $\omega_S^{-1}$ is
ample. The degree (the self-intersection number of $\omega_S$) of
any such surface can be any integer between 1 and 9. Such
varieties were discussed in \cite{ColKarMer07}, \cite{Cor05}, and
\cite{Vos82}. As mentioned in some of these references, a del
Pezzo surface of degree 6 is a toric variety for a particular two
dimensional torus, which we will describe below. We explore this
toric structure in Section \ref{secToricVar}. The result is
Theorem \ref{torus}, a classification of all such surfaces up to
isomorphism preserving the action of the torus. Section
\ref{secMain} contains the main result of the paper, Theorem
\ref{main}, where it is proved that a del Pezzo surface of degree
6 is determined by a pair $B$ and $Q$ of separable $F$-algebras,
with centers $K$ and $L$ \'{e}tale quadratic and cubic over $F$
respectively, and both containing $K \tens_F L$ as a subalgebra.
Moreover, $\cor_{K/F}(B)$ and $\cor_{L/F}(Q)$ must be split. As an
immediate corollary of Theorems \ref{torus} and \ref{main}, we
give a necessary and sufficient condition in terms of $B$ and $Q$
for determining when the corresponding surface will have a
rational point. 

In Section \ref{secK_0}, we relate the algebras $B$ and $Q$ to the
endomorphism rings of locally free sheaves on the associated del
Pezzo surface $S$. These sheaves are used in Theorem \ref{zero} to
relate the Quillen $K$-theory of $S$ to that of $B$ and $Q$, by
showing that the algebra $A = F \times B \times Q$ is isomorphic
to $S$ in a certain $K$-motivic category $\cC$, constructed in
\cite{Pan94}. This implies that for all $n$,
\begin{displaymath}
K_n(S) \cong K_n(A) = K_n(F) \oplus K_n(B) \oplus K_n(Q).
\end{displaymath}
As a corollary we obtain an index reduction formula for the
function field of $S$.

I would like to thank my advisor Alexander Merkurjev, who posed
this question to me, and answered several of my questions which
developed along the way.

We use the following notations and conventions:

An  $F$-variety is a separated scheme of finite type over $\Spec
(F)$.

$\overline{F}$ will denote a separable closure of $F$.

An $F$-algebra A is separable if $A \tens_F L$ is semisimple for every
field extension $L$ of $F$. Such an algebra is Azumaya over its
center, which is an \'{e}tale extension of $F$.

$\Gamma$ will denote the group $\Gal (\overline{F}/F)$.

For any $F$-variety $X$ and any field extension $E$ of $F$, we
will denote $X \times_{\Spec F} \Spec(E)$ (resp. $X \times_{\Spec
F} \Spec(\overline{F})$) by $X_E$ (resp. $\overline{X}$).

For any separable $F$-algebra $A$ and any \'{e}tale extension $E$
of $F$, we will denote $A \tens_F E$ (resp. $A \tens_F
\overline{F}$) by $A_E$ (resp. $\overline{A}$).

If $D$ is a Cartier divisor on a variety $X$, $\cL(D)$ will denote
the corresponding invertible sheaf on $X$.

For any variety $X$ and any separable algebra $A$, $\mathbf{P}(X;
A)$ will denote the exact category of left $A \tens_F
\cO_X$-modules which are locally free $\cO_X$-modules. We will
denote $ \mathbf{P}(X; F)$ (resp. $\mathbf{P}(\Spec F; A)$) by
$\mathbf{P}(X)$ (resp. $\mathbf{P}(A)$).

For any integer $n$, $K_n(X; A)$ will denote the Quillen group
$K_n(\mathbf{P}(X; A))$. As above, we will denote $K_n(X; F)$ by
$K_n(X)$ and $K_n(\Spec F;A)$ by $K_n(A)$.

For any algebraic torus $T$, $\widehat{T}$ will denote the
$\Gamma$-module of characters $\Hom_{\overline{F}}(\overline{T},
\mathbb{G}_{m, \overline{F}})$.

\section{Toric Varieties}
\label{secToricVar}

We first recall from \cite{Har77}, \cite{Man74}, and \cite{Vos82}
some basic properties of the variety $\widetilde{S}$, the blow up
of $\P^2$ at the 3 non-collinear points $[1 : 0 : 0]$, $[0 : 1 :
0]$, and $[0 : 0 : 1]$. The variety $\widetilde{S}$ can be
realized as a closed subvariety of $\P^2 \times \P^2$, defined by
the equations $x_0 y_0 = x_1 y_1 = x_2 y_2$. The projection onto
the first factor of $\P^2$ is the blow down of the three lines
$m_0 = \{x_1 = x_2 = 0\}$, $m_1 = \{x_0 = x_2 = 0\}$, and $m_2 =
\{x_0 = x_1 = 0\}$. Similarly, the projection onto the second
factor of $\P^2$ is the blow down of the three lines $l_0 = \{y_1
= y_2 = 0\}$, $l_1 = \{y_0 = y_2 = 0\}$, and $l_2 = \{y_0 = y_1 =
0\}$.

\begin{proposition} Let $\widetilde{S}$ be the blow up of $\P^2$
at the three points $[1 : 0 : 0]$, $[0 : 1 : 0]$, and $[0 : 0 :
1]$.
\begin{enumerate}
\item[i.] The variety $\widetilde{S}$ is a del Pezzo surface of
degree 6 over $F$, and if $F$ is separably closed, any del Pezzo
surface $S$ of degree 6 over $F$ is isomorphic to $\widetilde{S}$.

\item[ii.] The group $\CH^1(\widetilde{S})$ is generated by the
lines $l_0$, $l_1$, $l_2$, $m_0$, $m_1$, and $m_2$.

\item[iii.] The intersection pairing on $\CH^1 (\widetilde{S})$ is
determined by the following relations: $l_i^2 = - 1$, $m_i^2 =
-1$, $l_im_j = 1$, and $l_im_i=l_il_j = m_im_j = 0$, for distinct
$i, j \in \{0, 1, 2\}$.

\item[iv.] The group $\CH^2(\widetilde{S})$ is cyclic, generated
by the class of any rational point.

\end{enumerate}
\label{delPezzo}
\end{proposition}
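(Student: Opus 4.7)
The plan is to handle parts (ii)--(iv) by explicit calculation in the Chow groups of the blowup, and to deduce (i) from the resulting intersection theory together with one appeal to the classical classification of del Pezzo surfaces.

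First I would focus on (ii) and (iii) together. The first projection $\pi_1: \widetilde{S} \to \P^2$ realizes $\widetilde{S}$ as the blowup of $\P^2$ at the three coordinate points with exceptional divisors $m_0, m_1, m_2$, so the standard blowup theory gives an orthogonal decomposition $\CH^1(\widetilde{S}) = \Z H \oplus \bigoplus_i \Z m_i$, where $H = \pi_1^* \cO_{\P^2}(1)$, with $H^2 = 1$, $H \cdot m_i = 0$, $m_i^2 = -1$, and $m_i \cdot m_j = 0$ for $i \neq j$. Inspecting the equations $x_0 y_0 = x_1 y_1 = x_2 y_2$ together with the definitions of the $l_i$, I would identify $l_i$ as the strict transform of the line $\{x_i = 0\} \subset \P^2$ joining the two blown-up points other than $p_i$, so that $l_i = H - m_j - m_k$ with $\{i,j,k\} = \{0,1,2\}$. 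This identity immediately yields $H = l_0 + m_1 + m_2$, proving that the six lines generate $\CH^1(\widetilde{S})$, and substituting into the known basis intersections produces every relation of (iii) by a short computation.

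For (i), the degree follows from the adjunction formula for blowups: $K_{\widetilde{S}} = -3H + m_0 + m_1 + m_2$, so $K_{\widetilde{S}}^2 = 9 - 3 = 6$. To check that $-K_{\widetilde{S}}$ is ample I would apply the Nakai--Moishezon criterion: $(-K)^2 = 6 > 0$, $-K \cdot m_i = -K \cdot l_i = 1$, and for any other irreducible curve $C \equiv aH - \sum b_i m_i$ with $a \geq 1$, each $b_j + b_k$ is at most $a$ by Bezout applied to $C$ and the line $\{x_i = 0\}$, whence $\sum b_i \leq 3a/2 < 3a$ and $-K \cdot C > 0$. The uniqueness half of (i) -- that over a separably closed field every degree 6 del Pezzo is isomorphic to $\widetilde{S}$ -- is the one external input: I would cite the classical result (e.g.\ in Manin's or Demazure's treatment of surfaces) that any del Pezzo surface of degree $d \leq 7$ over an algebraically closed field is the blowup of $\P^2$ at $9-d$ points in general position, noting that for $d=6$ the general position condition reduces to non-collinearity. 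This classification step is the main obstacle to a self-contained argument, since reproving it would require a substantial detour through the theory of minimal models of rational surfaces.

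Finally, for (iv), I would use that $\widetilde{S}$ is a smooth projective geometrically rational surface which admits $F$-rational points (for instance, the three blown-up points themselves); blowups of smooth surfaces at rational points preserve $\CH_0$, so $\CH^2(\widetilde{S}) = \CH_0(\widetilde{S}) \cong \CH_0(\P^2) \cong \Z$ via the degree map, and any rational point, being of degree $1$, generates.
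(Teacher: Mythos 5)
Your argument is correct. Note, however, that the paper does not actually prove Proposition \ref{delPezzo}: it simply recalls these facts, citing Hartshorne, Manin, and Voskresenski\u{\i}, so there is no internal proof to compare against. Your computation is the standard one that those references carry out (and presumably what the author has in mind): identify $l_i$ as the strict transform of the coordinate line through the two points $p_j,p_k$, write $l_i = H - m_j - m_k$ in the orthogonal basis $\{H, m_0,m_1,m_2\}$ coming from the blowup, read off (ii) and (iii), compute $K^2 = 6$, verify ampleness via Nakai--Moishezon, and cite the classification of del Pezzo surfaces of low degree for uniqueness. The Nakai--Moishezon check is complete as written (the Bezout bound $b_j + b_k \le a$ handles every integral curve other than the six lines, and you treat those directly), and the appeal to the blowup formula $\CH_0(\mathrm{Bl}_p S) \cong \CH_0(S)$ for a rational point $p$ on a smooth surface, together with $\CH_0(\P^2) \cong \Z$, settles (iv). You correctly flag the uniqueness half of (i) as the one genuinely external input, which is appropriate.
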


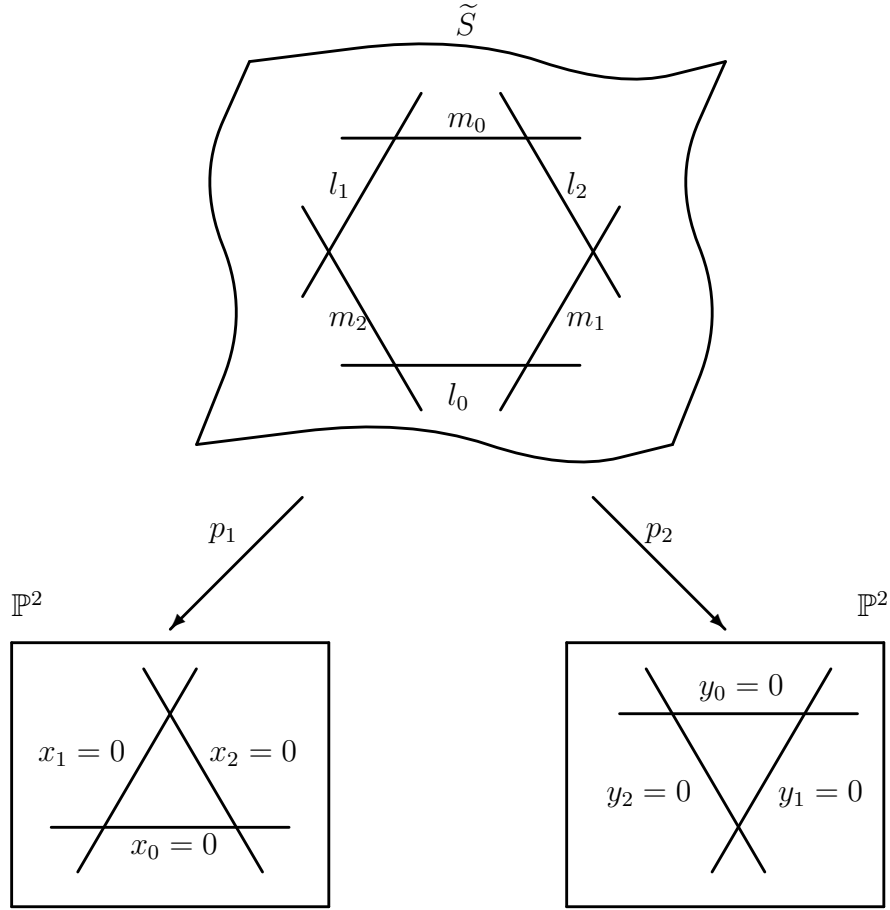
\begin{figure}
\begin{picture}(300,400)(10,20)
\Thicklines

\put (35 ,100 ){\drawline(-20,0)(70,0) \drawline(-10, -17)(35, 60)
  \drawline(15, 60)(60,-17)}

\put(40,90){ \drawline(-40, -20)(80, -20)(80, 80)(-40, 80)(-40, -20)}

\put (250,100 ){\drawline(-20,43)(70,43) \drawline(-10, 60)(35, -17)
  \drawline(15, -17)(60,60) }

\put(250,90){ \drawline(-40, -20)(80, -20)(80,80)(-40,80)(-40,-20)}


\put (120, 275){\drawline(-10, 26)(35, 103) \drawline(5, 86)(95, 86)
  \drawline(65, 103)(110, 26) \drawline(110, 60)(65, -17)
  \drawline(95, 0)(5, 0) \drawline(35, -17)(-10, 60)}

\put(70,245){\spline(20, 145)(100, 155)(160, 135)(200, 145)
\spline(0, 0)(80, 10)(140, -10)(180, 0)
\spline(0, 0)(20, 50)(0, 100) (20, 145)
\spline(180, 0)(200, 50)(180, 100) (200, 145)}

\put(110,225){\vector(-1, -1){50}}

\put(220,225){\vector(1, -1){50}}


\put(0,180){$\P^2$} \put(10, 125){$x_1 = 0$} \put(75, 125){$x_2
=0$} \put(45,90){$x_0 = 0$}

\put(320, 180){$\P^2$}
\put(260, 150){$y_0 = 0$}
\put(290, 110){$y_1 = 0$}
\put(225, 110){$y_2 = 0$}

\put(168,400){$\widetilde{S}$}

\put(165, 260){$l_0$}
\put(120, 340){$l_1$}
\put(210, 340){$l_2$}
\put(165, 365){$m_0$}
\put(210, 290){$m_1$}
\put(120, 290){$m_2$}

\put(75, 210){$p_1$} \put(240, 210){$p_2$}

\end{picture}
\caption[]{The Hexagon of Lines.}
\end{figure}

As mentioned in \cite{ColKarMer07}, there is an action of the
torus $\widetilde{T} = \gm^3/\gm$ on $\P^2$,  described by:
\begin{equation*}(t_0, t_1, t_2) \cdot [x_0, x_1, x_2; y_0, y_1, y_2] = [t_0
x_0, t_1 x_1, t_2 x_2; t_0^{-1} y_0, t_1^{-1} y_1, t_2^{-1}y_2].
\end{equation*}
Here $\gm$ embeds into $\gm^3$ diagonally. This action sends
$\widetilde{S}$ to itself, and is faithful and transitive on the
open subset $\widetilde{U}$ of $\widetilde{S}$, the complement of
the subvariety defined by the equation $x_0 x_1 x_2 y_0 y_1 y_2 =
0$. This closed subvariety has 6 irreducible components, the lines
$l_0$, $l_1$, $l_2$, $m_0$, $m_1$, and $m_2$, which by the
proposition are arranged in a hexagon. Thus $\widetilde{S}$ is a
$\widetilde{T}$-toric variety, with fan dual to the hexagon of
lines. There is also an action of the symmetric groups $S_2$ and
$S_3$ on $\widetilde{S}$. The nontrivial element of $S_2$ acts on
$\P^2 \times \P^2$ by interchanging the $x_i$ and $y_i$, and the
$S_3$ action on $\P^2 \times \P^2$ arises from the diagonal action
of $S_3$ on the coordinates $x_0$, $x_1$, $x_2$ and $y_0$, $y_1$,
$y_2$. The $S_2$ and $S_3$ actions commute with each other, and
both groups send $\widetilde{S} \subset \P^2 \times \P^2$ to
itself. Therefore they induce an action of $S_2 \times S_3$ on
$\widetilde{S}$, preserving the set of lines $l_0$, $l_1$, $l_2$,
$m_0$, $m_1$, and $m_2$, and thus inducing an isomorphism from
$S_2 \times S_3$ onto the automorphism group of the hexagon of
lines. The torus $\widetilde{T}$ is the connected component of the
identity of the algebraic group $\Aut_F(\widetilde{S})$ of
automorphisms of $\widetilde{S}$, and $S_2 \times S_3$ is the
group of connected components. The action of $S_2 \times S_3$ on
$\widetilde{S}$ define a section $S_2 \times S_3 \ra \Aut_F
(\widetilde{S})$, so we have the following split exact sequence of
algebraic groups:
\[1 \ra \widetilde{T} \ra \Aut_F(\widetilde{S}) \ra S_2 \times S_3
\ra 1.\]

We have another way to realize $\widetilde{S}$ as a closed
subvariety of a product of projective spaces. Define
$f_i:\widetilde{S}: \ra \P^1$ for $i = 0, 1, 2$ by
\begin{align*}
f_0([x_0: x_1: x_2 ; y_0: y_1: y_2]) =  [x_1 : x_2]\ \
\mathrm{or}\ [y_2 : y_1] \\
f_1([x_0: x_1: x_2 ; y_0: y_1: y_2]) = [x_2 : x_0]\ \ \mathrm{or}\
[y_0 : y_2] \\
f_2([x_0: x_1: x_2 ; y_0: y_1: y_2]) = [x_0 : x_1]\ \ \mathrm{or}\
[y_1 : y_0].
\end{align*}
Each $f_i$ is well defined, as the two definitions agree on the
overlap, and thus is a morphism of varieties. These morphisms
define a morphism $f: \widetilde{S} \ra \P^1 \times \P^1 \times
\P^1$. If we denote the bi-homogeneous coordinates of $\P^1 \times
\P^1 \times \P^1$ by $X_0$, $X_1$, $Y_0$, $Y_1$, $Z_0$, and $Z_1$,
it can be shown that $f$ maps $\widetilde{S}$ isomorphically onto
the hypersurface of $\P^1 \times \P^1 \times \P^1$ defined by the
equation $X_0Y_0Z_0 = X_1Y_1Z_1$. The morphism $f$ sends
$\widetilde{T}$ to the torus
\begin{displaymath}\Ker ( \gm^2/\gm \times \gm^2/\gm \times \gm^2/\gm \xra{m} \gm^2/\gm),
\end{displaymath} where $m((t_0,t_1), (t'_0, t'_1), (t''_0, t''_1))
= (t_0t'_0t''_0, t_1t'_1t''_1)$.


Now let $S$ be a del Pezzo surface of degree 6 over an arbitrary
field $F$. Then $\overline{S}$ is a del Pezzo surface of degree 6
over $\overline{F}$, and thus by Proposition \ref{delPezzo} is
isomorphic over $\overline{F}$ to $\overline{\widetilde{S}}$. So
$S$ is an $F$-form of $\widetilde{S}$. As the six lines of the
hexagon form a full set of exceptional curves in $\overline{S}$,
the action of $\Gamma$ on $\overline{S}$ is globally stable on the
set of lines of the hexagon. Therefore, there is an open
subvariety $U$ whose complement $Z$ is isomorphic over
$\overline{F}$ to the hexagon of lines. The action of $\Gamma$ on
$\overline{Z}$ permutes its irreducible components, inducing an
action of $\Gamma$ on the hexagon.

Let $T$ denote the connected component of the identity of
$\Aut_F(S)$. The group of connected components $G$ of $\Aut_F(S)$
is an \'{e}tale group scheme: it is the group scheme determined
(as in Proposition 20.16 of \cite{KnuMerRosTig98}) by the
automorphism group of the hexagon of lines, with continuous
$\Gamma$-action on this finite group as in the previous paragraph.
So $T$ is a torus, $S$ is a $T$-toric variety, with an open set
$U$ which is a  $T$-torsor, and $\Gamma$-action on the fan
determined by the \'{e}tale group scheme $G$.

This $\Gamma$-action on the hexagon determines a homomorphism
$\gamma: \Gamma \ra S_2 \times S_3$. Projecting onto either factor
yields cocycles with values in $S_2$ and $S_3$, and thus $\gamma$
determines a pair $(K, L)$, where $K$ and $L$ are \'{e}tale
quadratic and cubic extensions of $F$, respectively. Note that
while the fan, dual to the hexagon of lines, is the same for all
del Pezzo surfaces of degree 6 over $F$, the possible
$\Gamma$-actions on the fan are in a one-to-one correspondence
with pairs $(K, L)$.

For a fixed cocycle $\gamma$ (i.e. a fixed pair $(K, L)$), we will
classify all del Pezzo surfaces $S$ of degree 6 where the
$\Gamma$-action on $\overline{Z} \subset \overline{S}$ is
determined by $\gamma$.

We have from \cite{Vos82}  the following short exact sequence of
$\Gamma$-modules:
\begin{equation*}
0 \ra \widehat{T} \ra \Z[KL/F] \ra \Pic(\overline{S}) \ra 0.
\label{PicardModule}
\end{equation*}
Here $KL$ denotes the algebra $K \tens_F L$, and $\Z[KL/F]$ is the
lattice of the six lines of $\overline{Z}$. The homomorphism
$\Z[KL/F] \ra \Pic(\overline{S})$ takes a line to the
corresponding Cartier divisor on $\overline{S}$. As described in
\cite{ColKarMer07}, this short exact sequence can be extended into
the exact sequence
\begin{equation}
0 \ra \widehat{T} \ra \Z[KL/F] \ra \Z[K/F] \oplus \Z[L/F] \ra \Z
\ra 0. \label{GammaModule}
\end{equation}
Here $\Z[L/F]$ is the lattice of pairs of opposite lines, and
$\Z[K/F]$ is the lattice of triangles, where each triangle is a
triple of skew lines. The homomorphism $\Z[KL/F] \ra \Z[L/F]$
sends each line to the pair containing it, and the homomorphism
$\Z[KL/F] \ra \Z[K/F]$ sends each line to the triangle containing
it. The homomorphism $\Z[K/F] \oplus \Z[L/F] \ra \Z$ is the
difference of the augmentation maps. This sequence induces the
following short exact sequence of $\Gamma$-modules:
\begin{equation}
0 \ra \widehat{T} \ra \Z[KL/F]/\Z \ra \Z[K/F]/\Z \oplus \Z[L/F]/\Z
\ra 0. \label{GammaModule2}
\end{equation}
where $\Z$ embeds into $\Z[K/F]$, $\Z[L/F]$, and $\Z[KL/F]$
diagonally.

In analogy with $R^{(1)}_{K/F}(\gm) := \Ker (N_{K/F}: R_{K/F}(\gm)
\ra \gm)$, we define the following algebraic $F$-groups:
\begin{align*}
\mathbf{G}_L & := \Ker( N_{KL/L}:R_{KL/F}(\gm) \ra R_{L/F}(\gm))
\\
\mathbf{G}_K & := \Ker (N_{KL/K}: R_{KL/F} (\gm) \ra R_{K/F}(\gm))
\end{align*}
These groups are $F$-tori, dual to the $\Gamma$-modules
$\Z[KL/F]/\Z[L/F]$ and $\Z[KL/F]/ \Z[K/F]$, where $\Z[K/F]$ and
$\Z[L/F]$ are diagonally embedded in $\Z[KL/F]$. The embeddings of
$R_{K/F}(\gm)$ and $R_{L/F}(\gm)$ into $R_{KL/F}(\gm)$ induce
embeddings $R^{(1)}_{K/F}(\gm) \ra \mathbf{G}_L$ and
$R^{(1)}_{L/F}(\gm) \ra \mathbf{G}_K$. The description of
$\widetilde{T} \subset \widetilde{S} \subset \P^2 \times \P^2$
above descends to the following exact sequence:
\begin{equation}
1 \ra R^{(1)}_{K/F}(\gm) \ra \mathbf{G}_L \ra T \ra 1.
\label{eq:toric2}
\end{equation}
Similarly, the description of $f(\widetilde{T}) \subset \P^1
\times \P^1 \times \P^1$ above descends to
\begin{equation*}
1 \ra R^{(1)}_{L/F}(\gm) \ra \mathbf{G}_K \ra T \ra 1.
\end{equation*}
We will use these sequences in Section
\ref{secK_0}.

Finally, from (\ref{GammaModule}) and (\ref{GammaModule2}), we
have corresponding sequences of $F$-tori:
\begin{equation}
1 \ra \gm \ra R_{K/F}(\gm) \times R_{L/F}(\gm) \ra R_{KL/F}(\gm)
\ra T \ra 1, \label{eq:toric}
\end{equation}
and
\begin{displaymath}
1 \ra R^{(1)}_{K/F}(\gm) \times R^{(1)}_{L/F}(\gm) \xra{\phi}
R^{(1)}_{KL/F}(\gm) \ra T \ra 1.
\end{displaymath}
Recall that for $E = K$, $L$, and $KL$,
\begin{align*}H^1(F, R^{(1)}_{E/F}(\gm)) =& F^\times / N_{E/F}(E^\times) \\
H^2(F, R^{(1)}_{E/F}(\gm)) =& \Ker (\cor_{E/F}:\Br(E) \ra
\Br(F)).
\end{align*}
Moreover, as $N_{KL/F}\((KL)^\times\)$ is a subgroup of $\NK$ and
$N_{L/F}(L^\times)$, it follows that the restriction of the
homomorphism of $H^1$ groups induced by $\phi$ to either factor is
just factoring out the corresponding subgroup of the quotient, and
thus $\phi$ will be surjective.  Therefore, by the induced long
exact sequence in cohomology, we obtain the following exact
sequence:
\begin{displaymath}1 \ra H^1(F, T) \ra \Ker(\cor_{K/F}) \times
\Ker (\cor_{L/F}) \ra \Ker (\cor_{KL/F}).\end{displaymath} where
the last homomorphism sends a pair $(x, y)$ to $\res_{KL/K}(x) -
\res_{KL/L}(y) \in \Br(KL)$.

Let $C_1$ be the set of $K$-algebra isomorphism classes of Azumaya
$K$-algebras $B$ of rank 9 such that $B_L = B \tens_K KL$ and
$\cor_{K/F}(B)$ are split, $C_2$ the set of $L$-algebra
isomorphism classes of Azumaya $L$-algebras $Q$ of rank 4 such
that $Q_K = Q \tens_L KL$ and $\cor_{L/F}(Q)$ are split, and set
$C = C_1 \times C_2$. Then $C$ is a pointed set with distinguished
element $(M_3(K), M_2(L))$, and the map $\psi: C \ra
\Ker(\cor_{K/F}) \times \Ker (\cor_{L/F})$ sending a pair $(B, Q)$
to $([B], [Q])$ is a morphism of pointed sets. Moreover,
$\res_{KL/K}([B]) = [B \tens_K KL]$ and $\res_{KL/L}([Q]) = [Q
\tens_L KL]$ are trivial, so it follows that $\psi$ maps into
$H^1(F, T)$.

\begin{theorem}
$\psi:C \ra H^1(F,T)$ is an isomorphism of pointed sets.
\end{theorem}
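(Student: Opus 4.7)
The plan is to verify injectivity and surjectivity of $\psi$ separately, using the description of $H^1(F, T)$ as a subgroup of $\Br(K) \times \Br(L)$ just derived. Injectivity is essentially formal: if $\psi(B, Q) = \psi(B', Q')$, then $[B] = [B']$ in $\Br(K)$, so $B$ and $B'$ are Azumaya $K$-algebras of rank $9$ in the same Brauer class, and therefore both equal $M_{3/d}(D)$, where $D$ is the underlying division $K$-algebra and $d = \ind([B])$; hence $B \cong B'$, and the same argument over $L$ gives $Q \cong Q'$.

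For surjectivity, fix $(x, y) \in H^1(F, T)$, so $\cor_{K/F}(x) = 0$, $\cor_{L/F}(y) = 0$, and $\res_{KL/K}(x) = \res_{KL/L}(y)$ in $\Br(KL)$. The first step is to show that $x$ is $3$-torsion and $y$ is $2$-torsion. Since $K$ and $L$ are \'{e}tale over $F$ of coprime degrees $2$ and $3$, the standard base-change identity $\cor_{KL/K} \circ \res_{KL/L} = \res_{K/F} \circ \cor_{L/F}$ holds, and applying it yields
\[
3x \;=\; \cor_{KL/K}(\res_{KL/K}(x)) \;=\; \cor_{KL/K}(\res_{KL/L}(y)) \;=\; \res_{K/F}(\cor_{L/F}(y)) \;=\; 0,
\]
and symmetrically $2y = 0$. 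The common residue $\res_{KL/K}(x) = \res_{KL/L}(y)$ is then killed by both $2$ and $3$, hence vanishes; so $KL$ splits $x$ over $K$ and $y$ over $L$.

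The second step is to extract $B \in C_1$ with $[B] = x$, which requires $\ind(x) \mid 3$. A short case analysis on the \'{e}tale structure of $L$ gives this: if $L$ is a cubic field, then $KL$ is \'{e}tale of rank $3$ over $K$ and the index-degree inequality applies to its field factors (whether $K$ is a field or a product); if $L$ has a factor isomorphic to $F$, then $KL$ has a factor isomorphic to $K$, so the vanishing $\res_{KL/K}(x) = 0$ already forces $x = 0$. Taking $B = M_{3/d}(D)$ with $d = \ind(x)$ and $D$ the underlying division algebra of $x$, one has $[B] = x$, $B_L$ split (as $\res_{KL/K}([B]) = 0$) and $\cor_{K/F}(B)$ split (as $\cor_{K/F}([B]) = 0$), so $B \in C_1$; a parallel construction produces $Q \in C_2$ with $[Q] = y$, and $\psi(B, Q) = (x, y)$. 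The main obstacle is the index bound $\ind(x) \mid 3$ itself, which depends on both the torsion computation and the case analysis; the base-change identity is standard but still has to be applied carefully when $K$ or $L$ degenerates to a non-field \'{e}tale algebra.
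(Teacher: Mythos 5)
Your proof is correct and follows essentially the same route as the paper's: injectivity from the fact that same-rank, similar Azumaya algebras are isomorphic, and surjectivity by first deducing $3x = 0$, $2y = 0$ via the corestriction-restriction base-change identity, concluding that the common residue in $\Br(KL)$ vanishes, and then running the étale case analysis on $L$ to bound $\ind(x)$ by $3$ (and symmetrically $\ind(y)$ by $2$) so that a representing Azumaya algebra of the required rank exists. Your phrasing that the common residue is ``killed by both $2$ and $3$'' is in fact cleaner than the paper's slightly garbled ``has order divisible by 2 and 3,'' but the content and logic are the same.
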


\begin{proof}
If $\psi(B, Q) = \psi (B', Q')$, then $[B] = [B'] \in
\Ker(\cor_{K/F}) \subset \Br(K)$. Then $B$ and $B'$ are similar
Azumaya $K$-algebras of the same rank, and thus must be isomorphic
as $K$-algebras. Similarly, $Q$ and $Q'$ are isomorphic, so that
$\psi$ is injective.

Now let $(x, y) \in H^1(F, T)$, so that $(x, y) \in
\Ker(\cor_{K/F}) \times \Ker(\cor_{L/F})$, and $\res_{KL/K}(x) =
\res_{KL/L}(y)$. This implies that \begin{align*} 3x &=
\cor_{KL/K}(\res_{KL/K}(x))  \\
& = \cor_{KL/K}(\res_{KL/L}(y)) = \res_{K/F}(\cor_{L/F}(y)) =
0.\end{align*} Similarly, 2y = 0. Thus $\res_{KL/K}(x) =
\res_{KL/L}(y)$ has order divisible by 2 and 3, and therefore is
trivial. If $L$ is not a field, then $L = F \times E$, where $E$
is an \'{e}tale quadratic extension of $F$. Then $\res_{KL/K}(x) =
(x, \res_{E \tens_F K/K}(x))$, and so $\res_{KL/K}(x) = 0$ implies
$x = 0$. If $L$ is a field, then $x$ is split by a field extension
of degree 3 (If $K = F \times F$, and $x = (x_1, x_2) \in \Br(K) =
\Br(F) \times \Br(F)$ is split by $KL$ if and only if $x_1$ and
$x_2$ are split by $L$). Thus for all possible $K$ and $L$, there
is an Azumaya $K$-algebra $B$ of rank 9 that represents $x$ in
$\Br(K)$. Since $\res_{KL/K}(x)$ and $\cor_{K/F}(x)$ are trivial,
$B \tens_K KL$ and $\cor_{K/F}(B)$ are split. Similarly, there is
an Azumaya $L$-algebra $Q$ of rank 4 which represents $y$ such
that $Q \tens_L KL$ and $\cor_{L/F}(Q)$ are split. Then $(B, Q)
\in A$, and $\phi(B,Q) = (x,y)$, so $\psi$ is surjective.
\end{proof}

\begin{remark}
As $KL$ is an \'{e}tale algebra of degree 3 over $K$, if $KL$
splits $B$, then $KL$ can be embedded as a subalgebra of $B$.
Similarly, $KL$ can be embedded as a subalgebra of $Q$. If $(B, Q)
= (B', Q')$ in $C = H^1(F,T)$, then any $K$-isomorphism from $B$
to $B'$ sends $KL \subset B$ to a subalgebra of $B'$ isomorphic to
$KL$. Moreover, if we choose a fixed embedding of $KL$ into both
$B$ and $B'$, by applying Skolem-Noether to $B'$ we can find an
isomorphism from $B$ to $B'$ which restricts to the identity on
$KL$. Similarly, we may assume that $Q$ to $Q'$ are isomorphic via
an isomorphism which is the identity on $KL$.

It follows that if $K$ and $L$ are \'{e}tale quadratic and cubic
extensions of $F$ respectively, and $T$ is the two dimensional
torus induced from $K$ and $L$ as in the exact sequence
(\ref{eq:toric}), then elements of $H^1(F, T)$ are determined by
triples $(B, Q, KL)$, where $B$ is an Azumaya $K$-algebra of rank
9 such that $\cor_{K/F} (B)$ is split, $Q$ is an Azumaya algebra
over $L$ of rank 4 such that $\cor_{L/F} (Q)$ is split, and we
have a fixed embedding of $KL$ as a subalgebra into both $B$ and
$Q$. Two triples $(B, Q, KL)$ and $(B', Q', KL)$ will determine
the same element of $H^1(F, T)$ if there are $KL$-algebra
isomorphisms from $B$ to $B'$ and $Q$ to $Q'$.
\end{remark}

If $S$ is a del Pezzo surface of degree 6, and if $T$ is the
connected component of the identity of $\Aut_F (S)$, $S$ is a
$T$-toric variety, with $\Gamma$-action on the fan induced by the
$\Gamma$-action $\gamma$ on the connected components of
$\overline{Z}$, the hexagon of lines. The $T$-torsors $U \subset
S$ is determined by an element of the pointed set $H^1(F,T)$. Two
surfaces $S$ and $S'$ will be isomorphic as toric varieties if and
only if $T$ and $T'$ are isomorphic as algebraic groups, and there
is an isomorphism from $S$ to $S'$ which preserves the action of
$T \cong T'$ on $S$ and $S'$, thus inducing isomorphisms
$\Gamma$-actions on the fan and isomorphisms of the $T$-torsors
determining $S$ and $S'$. Thus we have proved the following:

\begin{theorem}
Let $S$ be a del Pezzo surface of degree 6, and $T$ be the
connected component of the identity of the group $\Aut_F(S)$. Then
$S$ is a $T$-toric variety with $\Gamma$-invariant fan determined
by a pair $(K, L)$ and the $T$-torsor $U$ determined by a triple
$(B, Q, KL)$. Two triples $(B, Q, KL)$ and $(B', Q', K'L')$ will
describe isomorphic toric varieties if and only if $K$ and $L$ are
isomorphic to $K'$ and $L'$ as $F$-algebras, (so that $T \cong
T'$), and there exist $KL$-algebra isomorphisms from $B$ to $B'$
and $Q$ to $Q'$. \label{torus}

\end{theorem}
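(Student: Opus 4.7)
The plan is to assemble the theorem by packaging the ingredients already established in the preceding discussion. The first assertion, that $S$ is a $T$-toric variety whose $\Gamma$-invariant fan is determined by a pair $(K,L)$, follows directly from the paragraphs preceding the statement: the hexagon of exceptional curves on $\overline{S}$ is $\Gamma$-stable, so the induced permutation action gives a cocycle $\gamma:\Gamma \to S_2\times S_3$, and projection onto the two factors produces the étale quadratic extension $K$ and the étale cubic extension $L$. Moreover, from the exact sequence (\ref{GammaModule}) the character $\Gamma$-module $\widehat{T}$ (and hence $T$) is recovered from $(K,L)$, so any isomorphism $T\cong T'$ of tori compatible with the $\Gamma$-action on fans is equivalent to $F$-algebra isomorphisms $K\cong K'$ and $L\cong L'$.

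Next, I would use the standard bijection between isomorphism classes of $T$-torsors over $F$ and $H^1(F,T)$ to associate to the open torsor $U\subset S$ a class in $H^1(F,T)$. Applying the theorem immediately preceding the Remark identifies this class with a pair $(B,Q)\in C$, and the Remark refines this data to a triple $(B,Q,KL)$ in which the embedding $KL\hookrightarrow B$ and $KL\hookrightarrow Q$ is specified. This gives the first half of the theorem: every such $S$ is determined (given $T$) by a triple $(B,Q,KL)$.

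For the isomorphism criterion, suppose $S$ and $S'$ are isomorphic as toric varieties; then by definition the corresponding tori $T$, $T'$ are isomorphic as $F$-groups, forcing $(K,L)\cong (K',L')$ by the first paragraph, and the isomorphism $S\xrightarrow{\sim} S'$ restricts to an isomorphism of the open $T$-torsors $U\xrightarrow{\sim} U'$. Under the bijection with $H^1(F,T)$ this equality of torsor classes, together with the preceding theorem and the Remark, is exactly the assertion that $B\cong B'$ and $Q\cong Q'$ by $KL$-algebra isomorphisms. Conversely, $KL$-isomorphisms of the triples produce the same class in $H^1(F,T)$, hence isomorphic torsors $U\cong U'$; since a toric variety is reconstructed from its fan together with its open torsor, this extends to a $T$-equivariant isomorphism $S\cong S'$.

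The main obstacle is really only bookkeeping: one must verify that an isomorphism of toric varieties in the sense used here breaks up cleanly into the data of (a) an isomorphism of tori with their $\Gamma$-actions on the fan, and (b) an isomorphism of the associated $T$-torsors. Once this is in place, the theorem is a direct repackaging of the previous theorem and Remark, with the pair $(K,L)$ controlling the first piece and the triple $(B,Q,KL)$ controlling the second.
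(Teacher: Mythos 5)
Your proposal is correct and follows essentially the same route as the paper: the paper's own ``proof'' is just the paragraph preceding the theorem, which likewise assembles the cocycle $\gamma$ yielding $(K,L)$, the classification of $T$-torsors by $H^1(F,T)$, the preceding theorem $\psi: C \to H^1(F,T)$ together with the Remark identifying classes with triples $(B,Q,KL)$, and the observation that a $T$-toric isomorphism decomposes into an isomorphism of $\Gamma$-fans plus an isomorphism of torsors. The ``bookkeeping'' you flag is exactly the step the paper also takes on faith in the sentence beginning ``Two surfaces $S$ and $S'$ will be isomorphic as toric varieties if and only if\dots''.
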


\section{The Main Theorem}
\label{secMain}

We would now like to classify these surfaces up to isomorphism as
abstract varieties. This is less restrictive than isomorphism as
toric varieties. We will see that a del Pezzo surface is still
determined by a triple $(B, Q, KL)$, but now we will allow
$F$-algebra isomorphisms on $B$ and $Q$, i.e. algebra isomorphisms
which may not fix $KL$.

Let $S$ be a del Pezzo surface of degree 6 over $F$. Then $S$ is a
$T$-toric variety, where $T$ is the connected component of
$\Aut_F(S)$, with the $\Gamma$-action on the fan determining a
pair $(K,L)$, and the $T$-torsor $U \subset S$ determining a pair
$(B, Q, KL)$. Let $G$ be the group of connected components of
$\Aut_F(S)$. As we have shown above, $G$ is an \'{e}tale group
scheme, determined by the action of $\Gamma$ on the hexagon of
lines. In particular, $G(F) = \Aut_F(KL) \cong \Aut_F(K) \times
\Aut_F(L)$.

Consider the following action of $G(F)$ on $H^1(F,T)$: if $(g, h)
\in \Aut_L(KL) \times \Aut_K(KL)$ and $(B, Q, KL) \in H^1(F,T)$
then $g$ nontrivial sends $B$ to $B\op$, and sends the embedding
$i:KL \ra B$ to $i^{g}:KL \ra B\op$, where $i^g (z) = i(g(z))\op
\in B\op$. As $KL$ is a cyclic extension of $L$, $Q$ is a cyclic
$L$-algebra, so there is an element $l \in L^\times$ such that $Q$
is generated by $KL$ and an element $y$, subject to the relations
$y^2 = l$ and $zy = y \sigma(z)$ for every $z \in KL$, where
$\sigma \in \Aut_L(KL)$ is the nontrivial automorphism of $KL$
over $L$. Such an algebra is denoted $(KL/L, l)$. Let $h$ act on
$Q = (KL/L,L)$ by $h \cdot (KL/L, l) = (KL/L, h(l))$, and send the
embedding $KL \ra Q$ to $KL \xra{h} KL \ra (KL, h(l))$. As $l \in
L^\times$ determines $Q = (KL/L, l)$ up to multiplication by an
element of the subgroup $\NKL \subset L^\times$, and as
$\Aut_F(L)$ takes $\NKL$ to itself, we see that this action is
well defined.

The orbits of this group action can be described in terms of
$F$-algebra isomorphisms on $B$ and $Q$, as we will show below. We
will use the following proposition several times: We will need the
following proposition:
\begin{proposition}[Proposition 4.18 of \cite{KnuMerRosTig98}]
Let $(B, \tau)$ be a central simple $F$-algebra of degree $n$ with
unitary involution, and let $K$ be the center of $B$. For every
$F$-subalgebra $L$ of $B$ which is \'{e}tale of dimension $n$ over
$F$, there exists a unitary involution of $B$ fixing $L$.
\label{involution}
\end{proposition}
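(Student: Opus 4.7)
The plan is to modify $\tau$ by an inner automorphism to obtain a new unitary involution fixing $L$. Observe first that a second-kind involution cannot fix any element of $K \setminus F$, so the conclusion of the proposition implicitly requires $K \cap L = F$; consequently $A := L \otimes_F K$ embeds in $B$ as a maximal \'etale $K$-subalgebra. The canonical $F$-algebra involution $\rho$ on $A$ defined by $\rho(l \otimes k) = l \otimes \tau(k)$ restricts to $\id_L$ on $L$ and to $\tau|_K$ on $K$, so the task reduces to extending $\rho$ to a unitary involution of $B$.

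To do so, I would compare the two $K$-algebra embeddings $\iota_1, \iota_2 \colon A \hookrightarrow B$ given by $\iota_1(l \otimes k) = lk$ (the inclusion) and $\iota_2(l \otimes k) = \tau(l)k$, whose images $A$ and $\tau(A)$ are both maximal \'etale. By Skolem--Noether there exists $u \in B^\times$ with $\iota_1 = \Int(u) \circ \iota_2$, which gives $u\tau(l)u^{-1} = l$ for all $l \in L$; a check using $\tau|_A = \iota_2 \circ \rho$ further shows that $\Int(u)(\tau(a)) = \rho(a)$ for every $a \in A$. Provisionally set $\tau' := \Int(u) \circ \tau$; this is an $F$-linear anti-automorphism with $\tau'|_L = \id_L$ and $\tau'|_K = \tau|_K$, and it remains to arrange $(\tau')^2 = \id$.

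A direct computation gives $(\tau')^2 = \Int(b)$ with $b := u\tau(u)^{-1}$, so the involution condition is $b \in Z(B) = K$. Applying $\tau$ to $u\tau(l)u^{-1} = l$ shows $\Int(b)$ fixes $L$, hence $b$ lies in the centralizer of $L$ in $B$, which is $A$. The identity $\Int(u)(\tau(a)) = \rho(a)$ applied with $a = b$ then yields
\[
\rho(b) = \Int(u)(\tau(b)) = \Int(u)(u^{-1}\tau(u)) = \tau(u)u^{-1} = b^{-1},
\]
so $b$ is a norm-one element of $A$ with respect to $\rho$. Since $A$ is \'etale quadratic over its $\rho$-fixed subring $L$, Hilbert 90 (reduced componentwise if $L$ is not a field) produces $e \in A^\times$ with $b = \rho(e)/e$. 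Replacing $u$ by $u\tau(e)^{-1}$ preserves the Skolem--Noether relation since $\tau(e)^{-1}$ lies in $\tau(A)$ and so centralizes $\tau(L)$; a short calculation using $\Int(u)(\tau(e^{-1})) = \rho(e^{-1})$ shows the new $b$ becomes $\rho(e^{-1}) \cdot b \cdot e = b \cdot e/\rho(e) = b \cdot b^{-1} = 1$, whence the adjusted $u$ satisfies $\tau(u) = u$ and $\tau' = \Int(u)\circ\tau$ is the desired unitary involution fixing $L$. The main obstacle is establishing $\rho(b) = b^{-1}$, which is the identity that makes Hilbert 90 applicable; once this is in hand, the rest is formal.
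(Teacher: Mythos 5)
The paper itself does not prove this proposition; it simply cites Proposition 4.18 of \cite{KnuMerRosTig98}, so there is no in-paper argument to compare against. Your proof is a self-contained Skolem--Noether/Hilbert 90 extension argument, and the core computations all check out: $\tau' := \Int(u)\circ\tau$ satisfies $(\tau')^2 = \Int(b)$ with $b = u\tau(u)^{-1}$; applying $\tau$ to $u\tau(l)u^{-1}=l$ shows $b$ centralizes $L$, hence lies in the (commutative) centralizer $A$; the identity $\Int(u)(\tau(a))=\rho(a)$ for $a\in A$ gives $\rho(b)=b^{-1}$; and the adjustment $u\mapsto u\tau(e)^{-1}$ with $b=\rho(e)/e$ makes the new $b$ trivial. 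This is the standard route for such results and is correct once the setup is in place.

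There is, however, a genuine gap in the first paragraph. The implication ``$K\cap L = F$ \emph{consequently} $A:=L\otimes_F K$ embeds in $B$'' is false in general. Take $B=M_3(K)$ and $L=\{\diag(a,a,b):a\in K,\ b\in F\}\cong K\times F$: this is an \'etale $F$-subalgebra of $F$-dimension $3=\deg B$ with $L\cap K=F$, yet the $K$-algebra $LK$ generated by $L$ is only $2$-dimensional over $K$, so $L\otimes_F K\ra B$ has a kernel. (Indeed, for this $L$ no unitary involution fixes $L$ pointwise, since $\tau$ would be forced to act by conjugation on the $K$-component of $L$.) The correct hypothesis needed for the whole argument --- and for the proposition itself --- is that $L$ and $K$ are \emph{linearly disjoint}, i.e.\ $LK$ is a maximal \'etale $K$-subalgebra; when $L$ is a field this does follow from $L\cap K=F$, but not when $L$ has idempotents. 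In the paper's application this is automatic, since $B$ is taken to contain $KL\cong K\otimes_F L$ as a subalgebra, so the use of the proposition is sound even if the stated hypothesis is a bit loose. You should also flag that the Skolem--Noether step is being applied to a commutative \'etale (not simple) subalgebra; this is valid here because $A$ is a \emph{maximal} \'etale $K$-subalgebra (it therefore splits $B$, which is what makes the two embeddings conjugate), but that fact deserves a citation or a sentence of justification.
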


\begin{proposition}
$(B',Q') = g \cdot (B, Q)$ for some $g \in G(F)$ if and only if
there are $F$-automorphisms $\phi_B:B \ra B'$ and $\phi_Q:Q \ra
Q'$ such that $\phi_B|_{KL} = \phi_Q|_{KL} = g$.
\label{pairs}
\end{proposition}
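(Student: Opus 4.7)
The plan is to decompose $g = (g_K, g_L) \in \Aut_F(K) \times \Aut_F(L) \cong G(F)$ and argue the $B$-part and the $Q$-part separately, since by the definition of the action on $H^1(F,T)$ the twist of $B$ depends only on $g_K$ and the twist of $Q$ only on $g_L$. The two main tools are Skolem--Noether (to adjust $KL$-embeddings by inner automorphisms) and Proposition~\ref{involution} (to supply a unitary involution of $B$ fixing $L$ pointwise, needed when $g_K \neq \id$).

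For the forward direction, assume $(B',Q') = g \cdot (B,Q)$ in $H^1(F,T)$, so the hypothesis gives $KL$-algebra isomorphisms $\psi_B : B_g \to B'$ and $\psi_Q : Q_g \to Q'$, where $(B_g, Q_g)$ is the twisted triple prescribed by the action. I would first build a natural $F$-algebra isomorphism $\gamma_B : B \to B_g$ whose restriction to $KL$ equals the element $(g_K, \id) \in \Aut_F(KL)$: take $\gamma_B = \id$ when $g_K = \id$, and when $g_K \neq \id$ apply Proposition~\ref{involution} to the étale $F$-subalgebra $L \subset B$ (which has $F$-dimension equal to $\deg_K B = 3$) to obtain a unitary involution $\tau$ on $B$ fixing $L$ pointwise; the induced $F$-algebra isomorphism $\tilde\tau : B \to B^{op}$ then has $\tilde\tau|_{KL} = (g_K, \id)$. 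A Skolem--Noether inner automorphism $\alpha$ of $B$ realizing $(\id, g_L)$ on $KL$ (available because $(\id, g_L)$ is $K$-linear on $KL$ and $B$ is central simple over $K$) corrects the $L$-component, and setting $\phi_B = \psi_B \circ \gamma_B \circ \alpha$ gives $\phi_B|_{KL} = (g_K, \id) \circ (\id, g_L) = g$. The construction of $\phi_Q$ is parallel, with the roles of $K$ and $L$ swapped: use the cyclic presentation $Q = (KL/L, l)$ to define a map $\gamma_Q : Q \to Q_g = (KL/L, g_L(l))$ by $\gamma_Q|_{KL} = (\id, g_L)$ and $\gamma_Q(y) = y_g$ with $y_g^2 = g_L(l)$; the relation $zy = y\sigma(z)$ is preserved because $\sigma \in \Aut_L(KL)$ commutes with the $\Aut_K(KL)$-component of $g$. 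An $L$-linear Skolem--Noether correction then installs the $(g_K, \id)$ factor, yielding $\phi_Q$ with $\phi_Q|_{KL} = g$.

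For the reverse direction, given $\phi_B$ and $\phi_Q$ with common restriction $g = (g_K, g_L)$ on $KL$, I would build $\gamma_B$ and $\gamma_Q$ as above, and show that $\phi_B \circ \gamma_B^{-1} : B_g \to B'$, after a Skolem--Noether correction inside $B'$, is a genuine $KL$-algebra isomorphism. Concretely, its restriction to $KL$ equals $g \circ (g_K, \id)^{-1} = (\id, g_L)$, which is $K$-linear, hence can be made trivial by an inner automorphism of $B'$ realizing $(\id, g_L^{-1})$. This exhibits $(B', i_{B'}) \equiv g \cdot (B, i_B)$ in $H^1(F,T)$, and the symmetric argument handles the $Q$-side. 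The chief obstacle is the careful bookkeeping of which Skolem--Noether corrections are available at each step: on $B$ one may only use $K$-linear $KL$-automorphisms as inner automorphisms, and on $Q$ only $L$-linear ones, so the splitting of $g$ into its $K$- and $L$-components and matching them to the correct factor at the correct stage is essential.
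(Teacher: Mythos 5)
Your proposal is correct and follows essentially the same route as the paper's own proof: decompose $g$ into a $K$-fixing and an $L$-fixing component, use Skolem--Noether to realize the $K$-linear (resp.\ $L$-linear) piece by an inner automorphism of $B$ (resp.\ of $Q$), and invoke Proposition~\ref{involution} to produce the unitary involution giving the $F$-isomorphism $B \ra B\op$ that handles the $K$-reversing component. Packaging the paper's case analysis as the single composition $\phi_B = \psi_B \circ \gamma_B \circ \alpha$ (and its mirror for $Q$, and the inverse composition in the converse direction) is a cosmetic reorganization of the same argument, not a genuinely different method.
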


\begin{proof}
Assume that $(B',Q', KL) = g \cdot (B, Q, KL)$ for some $g \in
G(F)$. Any $F$-automorphism $g$ of $KL$ can be expressed as the
composition of two automorphisms, one fixing $K$ and one fixing
$L$. So it suffices to consider the separate cases where $K$ and
$L$ are fixed by the automorphism.

We first consider the case where $g$ fixes $K$, so that $B' = B$.
By Skolem-Noether, there is a $K$-automorphism $\phi_B$ of $B$
such that $\phi_B|_{KL} = g$. Now, $Q = (KL/L, l)$, $Q' = (KL/L,
g(l))$ and as $\sigma$ and $g$ commute, $g$ extends to an
$F$-automorphism $\phi_Q$ from $Q$ to $Q'$, by sending $KL$ to
$KL$ via $g$, and $y$ to $y'$. Then $\phi_B$ and $\phi_Q$ agree on
$KL$.

Now assume that $g = \sigma$ is the non-trivial $L$-automorphism
of $KL$, so that $Q' = Q$ and $B' = B\op$. As in the previous
paragraph, by Skolem-Noether there is an $L$-automorphism $\phi_Q$
of $Q$ such that $\phi_Q|_{KL} = g $. Moreover, as $\cor_{K/F}
(B)$ is split and $L$ is an \'{e}tale cubic extension of $F$, we
know by Proposition \ref{involution} that $B$ has a unitary
involution $\tau$ which is the identity on $L$. The involution
$\tau$ defines an $F$-isomorphism $\phi_B$ from $B$ to $B\op$,
such that $\phi_B|_{KL} = \sigma = g = \phi_Q|_{KL}$.

Conversely, assume that $\phi_B :B \ra B'$ and $\phi_Q: Q \ra Q'$
are $F$-isomorphisms such that $\phi_B|_{KL} = \phi_Q|_{KL} = g
\in \Aut_F(KL)$. As in the arguments above, we will first consider
the separate cases where $g$ fix $K$ and $L$.

If $g$ fixes $L$, then $\phi_Q$ is an isomorphism of $L$-algebras,
so that $(B', Q, KL) = (B', Q', KL)$ in $H^1 (F, T)$. If we
restrict $\phi_B$ to the center of $B$, we get an $F$-isomorphism
of $K$. If $\phi|_K$ is the identity (i.e.  $g$ is trivial), then
$B$ and $B'$ are isomorphic as $K$-algebras.  If $\phi|_K$ is not
the identity, then by pre-composing $\phi$ with the isomorphism
from $B\op$ to $B$ induced by any unitary involution $\tau$ fixing
$L$ on $B$, we see that $B\op$ and $B'$ are isomorphic as
$K$-algebras. In either case, $(B', Q', KL) = (B', Q, KL) = g
\cdot (B, Q, KL)$ in $H^1 (F, T)$.

Now assume that $g$ fixes $K$, so that $\phi_B:B \ra B'$ is an
isomorphism of $K$-algebras, and then $(B', Q', KL) = (B, Q', KL)$
in $H^1 (F, T)$. If $Q = (KL/L, l)$, and if $l' = \phi_Q(l) =
g(l)$, then $Q$ is isomorphic over $L$ to $(KL/L, l')$, and so
$(B', Q', KL) = (B, Q', KL) = g \cdot (B, Q)$ in $H^1 (F,T)$.

Finally, assume that $g$ does not fix $K$ or $L$. If $\sigma$ is
the nontrivial $L$-automorphism of $KL$, $\sigma g$ does fix $K$.
Moreover, by  post-composing $\phi_B$ with the $F$-isomorphism
$\phi_{B'}: B' \ra (B')\op$ induced by any unitary involution
$\tau$ of $B'$ fixing $L$ (which exist by Proposition
\ref{involution}), we get isomorphisms $\phi_{B'} \circ \phi_B: B
\ra (B')\op$ and $\phi_Q: Q \ra Q'$ such that $(\phi_{B'} \circ
\phi_B)|_{KL} = \phi_Q|_{KL} = \sigma g$. Therefore  by our
argument in the previous paragraph, $\sigma \cdot (B', Q', KL)
=((B')\op, Q', KL) = \sigma g \cdot (B, Q, KL)$ in $H^1(F, T)$.
Acting on both sides of this equation by $\sigma$, we get $(B',
Q', KL) = g \cdot (B, Q, KL)$.
\end{proof}

The next theorem relates isomorphism classes of del Pezzo Surfaces
of degree 6 with $G(F)$-orbits of $H^1(F, T)$. We will need the
following standard result from Galois cohomology: (cf. Corollary
(28.10) of \cite{KnuMerRosTig98} or Chapter I, Section 5.5,
Corollary 2 of \cite{Ser02}.)
\begin{proposition}
Let $\Gamma$ be a profinite group, $A$ and $B$ be $\Gamma$-groups
with $A$ a normal subgroup of $B$, and set $C = B/A$. If $\beta \in
H^1(\Gamma, B)$, and $b$ a cocycle representing $\beta$, then the
elements of $H^1(\Gamma, B)$ with the same image as $\beta$ in
$H^1(\Gamma, C)$ corresponding bijectively with
$(C_b)^\Gamma$-orbits of the set $H^1 (\Gamma, A_b)$.
\label{TwistCohomology}
\end{proposition}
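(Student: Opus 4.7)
The plan is to reduce to the case where $\beta$ is the distinguished class via a standard twisting construction, and then identify the resulting kernel with an orbit space using non-abelian exactness. Given the cocycle $b:\Gamma \ra B$, define the twisted $\Gamma$-action on $B$ by $\gamma *_b x := b(\gamma)\,\gamma(x)\,b(\gamma)^{-1}$; since $A$ is normal in $B$, this restricts to $A$ and descends to $C$, yielding $\Gamma$-groups $A_b$, $B_b$, and $C_b$. The fundamental tool is the twisting bijection of pointed sets
\begin{displaymath}
\tau_b: H^1(\Gamma, B_b) \iso H^1(\Gamma, B), \qquad [a] \mapsto [\gamma \mapsto a(\gamma)\,b(\gamma)],
\end{displaymath}
which sends the trivial class to $\beta$, together with the analogous bijection $\tau_b: H^1(\Gamma, C_b) \iso H^1(\Gamma, C)$. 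A short calculation shows these commute with the maps induced by $B \ra C$, so the fiber of $H^1(\Gamma, B) \ra H^1(\Gamma, C)$ above the image of $\beta$ corresponds under $\tau_b$ to the kernel of $H^1(\Gamma, B_b) \ra H^1(\Gamma, C_b)$.

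The exact sequence $1 \ra A_b \ra B_b \ra C_b \ra 1$ then yields, by the standard non-abelian exactness argument, that this kernel coincides with the image of $H^1(\Gamma, A_b) \ra H^1(\Gamma, B_b)$. To refine this image into an orbit space, I would define the action of $c \in (C_b)^\Gamma$ on $[a] \in H^1(\Gamma, A_b)$ by choosing any lift $\tilde c \in B_b$ and setting
\begin{displaymath}
(c \cdot a)(\gamma) := \tilde c\, a(\gamma)\, (\gamma *_b \tilde c)^{-1}.
\end{displaymath}
Since $c$ is $*_b$-fixed, the image modulo $A_b$ is $c\, c^{-1} = 1$, so the formula takes values in $A_b$; a direct calculation shows it is a cocycle, and that the resulting class in $H^1(\Gamma, A_b)$ is independent both of the lift $\tilde c$ and of the representative $a$, so this is a well-defined group action.

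The crux, and the main obstacle, is establishing the double implication that two classes $[a], [a'] \in H^1(\Gamma, A_b)$ map to the same class in $H^1(\Gamma, B_b)$ if and only if they lie in the same $(C_b)^\Gamma$-orbit. The reverse direction is immediate from the construction. For the forward direction, suppose $a'(\gamma) = \tilde b\, a(\gamma)\, (\gamma *_b \tilde b)^{-1}$ in $B_b$ for some $\tilde b \in B_b$; projecting to $C_b$ and using that both $a$ and $a'$ take values in $A_b$ forces the image $c$ of $\tilde b$ in $C_b$ to satisfy $\gamma *_b c = c$ for every $\gamma$, so $c \in (C_b)^\Gamma$, and by construction $c \cdot [a] = [a']$. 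Composing the resulting bijection with $\tau_b$ gives the claimed identification between the fiber of $H^1(\Gamma, B) \ra H^1(\Gamma, C)$ above the image of $\beta$ and the $(C_b)^\Gamma$-orbits of $H^1(\Gamma, A_b)$.
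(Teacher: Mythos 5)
The paper does not give a proof of Proposition \ref{TwistCohomology}; it cites the statement as a standard fact of non-abelian Galois cohomology, referring to Corollary (28.10) of \cite{KnuMerRosTig98} and Chapter I, Section 5.5, Corollary 2 of \cite{Ser02}, so there is no in-text argument to compare against. Your proposal is correct and is essentially the argument one finds in those references: use the twisting bijections $\tau_b$ for $B$ and $C$, observe that they intertwine the maps on $H^1$ induced by $B \to C$ (here one needs that the $\Gamma$-action on $C$ twisted by the image of $b$ agrees with the quotient action $B_b/A_b$, which you implicitly use), and thereby carry the fiber over the image of $\beta$ to the kernel of $H^1(\Gamma,B_b) \to H^1(\Gamma,C_b)$; then identify that kernel with the orbit set for the $(C_b)^\Gamma$-action on $H^1(\Gamma,A_b)$ coming from the twisted exact sequence $1 \to A_b \to B_b \to C_b \to 1$. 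Your verifications of well-definedness (independence of the lift $\tilde c$, where normality of $A$ in $B$ is used to conjugate, and independence of the cocycle representative) and of both directions of the orbit criterion are sound; you assert rather than check that the formula defines an action, but that is a routine computation in the same spirit. In short, the proof is valid and matches the cited sources.
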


\begin{theorem}
The isomorphism class of $S$ corresponds to a $G(F)$-orbit of $H^1(
F, T )$.
\end{theorem}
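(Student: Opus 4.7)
The plan is to apply the Galois-cohomology twisting principle of Proposition \ref{TwistCohomology} to the split short exact sequence
\[1 \to T \to \Aut_F(S) \to G \to 1,\]
and to combine the result with Proposition \ref{pairs}.

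Every del Pezzo surface of degree $6$ becomes isomorphic to $\overline{\widetilde{S}}$ after base change to $\overline{F}$, so isomorphism classes of such surfaces over $F$ are parametrized by $H^1(F,\Aut_F(S))$. The natural map $H^1(F,\Aut_F(S))\to H^1(F,G)$ records the $\Gamma$-action on the hexagon of lines, equivalently the pair $(K,L)$; the surfaces sharing the invariant of $S$ thus form the fiber over the trivial class. Because the sequence is split, I can represent this trivial class by the trivial cocycle and apply Proposition \ref{TwistCohomology} with $\beta=b=1$, identifying the fiber bijectively with $G^{\Gamma}$-orbits in $H^1(F,T)$. Here $G^{\Gamma}=G(F)=\Aut_F(K)\times\Aut_F(L)$, and by Theorem \ref{torus} the element $[U]\in H^1(F,T)$ attached to $S$ is encoded by the triple $(B,Q,KL)$.

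The last step is to check that the $G(F)$-action on $H^1(F,T)$ produced by the twisting principle coincides with the explicit action on triples defined in the paragraph preceding Proposition \ref{pairs}: the nontrivial element of $\Aut_L(KL)$ acts on the $B$-factor by the opposite-algebra construction, and $\Aut_K(KL)$ acts on $Q=(KL/L,l)$ by $l\mapsto h(l)$. Once this compatibility is established, Proposition \ref{pairs} translates ``lying in the same $G(F)$-orbit'' into ``linked by $F$-algebra isomorphisms of $B$ and $Q$,'' completing the desired bijection. The main obstacle is precisely this compatibility check, which amounts to a cocycle calculation tracking the conjugation action of a lifted element of $G$ on $T$ and translating the outcome through the identification of $H^1(F,T)$ with the set $C$ of triples.
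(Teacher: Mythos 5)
Your proposal is correct and follows essentially the same route as the paper: both reduce the statement to Proposition \ref{TwistCohomology}. The only cosmetic difference is that you apply the twisting principle directly to the already-twisted sequence $1\to T\to\Aut_F(S)\to G\to 1$ with the trivial cocycle, whereas the paper starts from the untwisted sequence $1\to\widetilde{T}\to\Aut_F(\widetilde{S})\to S_2\times S_3\to 1$ and twists by the cocycle $\gamma$ coming from $S$; the two are equivalent since the twisted sequence \emph{is} the $\gamma$-twist of the untwisted one. One small remark: your third paragraph (invoking Proposition \ref{pairs} and checking compatibility with the explicit action on triples) is really part of the argument for Theorem \ref{main}, not of the statement at hand, which is already proved once Proposition \ref{TwistCohomology} is applied; the paper likewise defers that compatibility discussion to the text surrounding the next theorem.
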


\begin{proof}
As mentioned in Section \ref{secToricVar}, we have the following
split exact sequence of algebraic groups:
\begin{equation} 1 \ra  \widetilde{T} \ra \Aut_F(\widetilde{S}) \ra S_2 \times S_3 \ra
1,
\end{equation}
where $\widetilde{T}$ is the connected component of the identity
of $\Aut_F(\widetilde{S})$, and $S_2 \times S_3$ is the group of
connected components. This sequence induces the split exact
sequence of pointed sets:
\[1 \ra H^1(F,\widetilde{T}) \ra H^1(F,\Aut_F(\widetilde{S})) \ra H^1(F,S_2 \times S_3) \ra 1. \]
The elements of $H^1( F, \Aut_F(\widetilde{S}))$ are in a
one-to-one correspondence with the set of isomorphisms classes of
$F$-forms of $\widetilde{S}$, which by Proposition \ref{delPezzo}
are del Pezzo surfaces of degree 6. Let $\beta \in Z^1( F,
\Aut_F(\widetilde{S}))$ be a cocycle whose cohomology class is
determined by the isomorphism class of $S$, and let $\gamma$ be
the image of $\beta$ in $Z^1(F, S_2 \times S_3)$. The cocycle
$\gamma$ is determined by the action of the Galois group $\Gamma$
on $\overline{Z} \subset \overline{S}$, and induces a pair $(K,
L)$. The twist of $\widetilde{T}$ by $\gamma$ is the torus $T$,
determined by $(K, L)$ as in the sequence (\ref{eq:toric}), and
the twist of $S_2 \times S_3$ is the \'{e}tale group scheme $G$.
The result follows by Proposition \ref{TwistCohomology}.
\end{proof}

Note that as $H^1(F, \Aut_F(\widetilde{S})) \ra H^1(F, S_2 \times
S_3)$ is surjective, we see that all possible pairs $(K, L)$ are
realized by the action of $\Gamma$ on $\overline{Z}$, for $Z$
contained in some del Pezzo surface $S$ of degree 6. So let $S_1$
and $S_2$ be two del Pezzo surfaces of degree 6, and let $(B_i,
Q_i, K_iL_i)$ be an element of the the $G_i(F)$-orbit of $H^1(F,
T_i)$ determined by $S_i$, for $i = 1, 2$.  Then $S_1$ and $S_2$
induce isomorphic $\Gamma$-actions on the hexagon of lines (so
that $(K_1, L_1) \cong (K_2, L_2)$, $T_1 \cong T_2$, and $G_1
\cong G_2$; we denote these algebraic objects $(K, L)$, $T$, and
$G$, respectively). It follows from Proposition \ref{pairs} that
two pairs $(B_1,Q_1, KL)$ and $(B_2,Q_2, KL)$ are in the same
$G(F)$-orbit of $H^1(F,T)$ if and only if there are isomorphisms
$\phi_{B_1}:B_1 \ra B_2$ and $\phi_{Q_1}: Q_1 \ra Q_2$ such that
$\phi_{B_1}|_{KL} = \phi_{Q_1}|_{KL}$. We have proved the
following theorem:
\begin{theorem}
There are bijections, inverse to each other, between the following
two sets:
\begin{itemize}
\item The set of isomorphism classes of del Pezzo surfaces of
degree 6.

\item The set of triples $(B,Q, KL)$, modulo the relation: $(B, Q,
KL) \sim (B', Q', K'L')$ if there are $F$-algebra isomorphisms
$\phi_B: B \ra B'$ and $\phi_Q: Q \ra Q'$ such that $\phi_B|_{KL}
= \phi_Q|_{KL}$.
\end{itemize}
\label{main}
\end{theorem}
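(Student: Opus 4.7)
The plan is to assemble the theorem from three ingredients already in place: Theorem \ref{torus}, which describes $H^1(F, T)$ in terms of triples $(B, Q, KL)$ modulo $KL$-algebra isomorphism; the preceding (unnumbered) theorem, which identifies $F$-isomorphism classes of del Pezzo surfaces of degree $6$ inducing a prescribed Galois action on the hexagon with $G(F)$-orbits of $H^1(F, T)$; and Proposition \ref{pairs}, which converts the $G(F)$-action on such triples into the language of $F$-algebra isomorphisms with coinciding restriction to $KL$. The statement is essentially the concatenation of these three dictionaries.

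First I would fix a del Pezzo surface $S$ of degree $6$ and extract the pair $(K, L)$ from the Galois action on the hexagon of exceptional lines in $\overline{S}$. This pair determines both the torus $T$ via (\ref{eq:toric}) and the \'{e}tale group scheme $G$ with $G(F) = \Aut_F(KL) \cong \Aut_F(K) \times \Aut_F(L)$. Applying the previous theorem, $F$-isomorphism classes of del Pezzo surfaces of degree $6$ giving this fixed $(K, L)$ correspond to $G(F)$-orbits of $H^1(F, T)$. Using Theorem \ref{torus} I would then replace $H^1(F, T)$ by the set of triples $(B, Q, KL)$ modulo $KL$-algebra isomorphism, and invoke Proposition \ref{pairs} to conclude that $G(F)$-orbits of these triples are precisely the equivalence classes under pairs of $F$-algebra isomorphisms $\phi_B, \phi_Q$ satisfying $\phi_B|_{KL} = \phi_Q|_{KL}$. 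This matches the relation in the theorem statement, since every element of $\Aut_F(KL) = G(F)$ arises as such a common restriction and, conversely, any common restriction lies in $G(F)$.

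To finish I would let $(K, L)$ vary. On the algebra side $(K, L)$ is recovered (up to $F$-isomorphism) from the centers of $B$ and $Q$, and on the geometric side from the Galois action on $\overline{Z}$, so both sides decompose consistently as disjoint unions over isomorphism classes of $(K, L)$. Every pair is realized geometrically because the sequence $H^1(F,\Aut_F(\widetilde{S})) \ra H^1(F, S_2 \times S_3)$ is surjective (indeed split). I expect the only step that is not purely formal to be the appeal to Proposition \ref{pairs}, and within that proposition the genuine difficulty is the $\sigma$-twist case, where producing an $F$-isomorphism $B \ra B\op$ with a prescribed restriction to $KL$ requires Proposition \ref{involution} to furnish a unitary involution of $B$ fixing $L$. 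Everything else is bookkeeping among the equivalences above.
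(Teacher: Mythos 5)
Your proposal is correct and reproduces the paper's own argument: the paper likewise assembles Theorem \ref{main} by combining the unnumbered theorem relating isomorphism classes of surfaces over a fixed $(K,L)$ with $G(F)$-orbits of $H^1(F,T)$, the identification of $H^1(F,T)$ with triples $(B,Q,KL)$ up to $KL$-algebra isomorphism, Proposition \ref{pairs} to translate the $G(F)$-orbit relation into common restrictions of $F$-algebra isomorphisms, and the surjectivity of $H^1(F,\Aut_F(\widetilde{S})) \ra H^1(F,S_2\times S_3)$ to let $(K,L)$ vary. You also correctly flag the $\sigma$-twist case of Proposition \ref{pairs}, using a unitary involution fixing $L$ from Proposition \ref{involution}, as the one nontrivial step.
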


For the rest of this paper, $S(B,Q, KL)$ will denote the del Pezzo
surface of degree 6 determined by the triple $(B, Q, KL)$.

\begin{corollary}
The surface $S (B, Q, KL)$ contains a rational point if and only if
$B$ and $Q$ are split. \label{splitCor}
\end{corollary}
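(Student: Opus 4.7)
The plan is to prove the two directions of the biconditional separately.

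The backward direction follows directly from the classification. If $B \cong M_3(K)$ and $Q \cong M_2(L)$, then under the isomorphism $\psi \colon C \iso H^1(F, T)$ the triple $(B, Q, KL)$ represents the distinguished element, so the associated $T$-torsor $U \subset S$ is trivial. Thus $U \cong T$ as $F$-varieties, and the identity element $1 \in T(F)$ provides a rational point of $U$, and hence of $S$.

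For the forward direction, suppose $p \in S(F)$. If $p \in U(F)$, then the $T$-torsor $U$ is trivial, and by Theorem \ref{main} the triple $(B, Q, KL)$ is the distinguished class of $H^1(F, T)$, so $B$ and $Q$ are split. Otherwise $p$ lies on the hexagon $Z \subset S$, and I would aim to produce a second rational point inside $U$. One way is to invoke a classical rationality result of Manin \cite{Man74}: any del Pezzo surface of degree at least $5$ admitting a rational point is $F$-rational; in particular $S(F)$ is Zariski-dense, so it meets the open dense subset $U$, which reduces to the previous case.

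A more intrinsic approach, which bypasses the rationality result, is to construct canonical $F$-morphisms $\pi_B \colon S \to V_B$ and $\pi_Q \colon S \to V_Q$, where $V_B$ is the Severi--Brauer variety of $B$ regarded as an $F$-scheme (so that $V_B(F)$ is non-empty precisely when $B$ is split) and similarly for $V_Q$. Over $\overline{F}$ these become, respectively, the two blowdowns $\overline{S} \to \P^2$ contracting the triangles of the hexagon and the three fibrations $\overline{S} \to \P^1$ coming from the morphism $f$ described earlier; the $\Gamma$-equivariance dictated by $(K, L)$ together with the torsor class $(B, Q)$ determines the descent to $F$. Evaluating $\pi_B$ and $\pi_Q$ at $p$ then yields rational points of $V_B$ and $V_Q$, forcing $B$ and $Q$ to be split.

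The main obstacle is the forward direction. In the first approach, one must handle the case where the given rational point lies on the hexagon by appealing to the rationality result. In the second approach, one must rigorously construct the descent of the blowdowns and fibrations and verify that their twisting data are precisely $B$ and $Q$; this last identification is essentially the content of the sheaf-theoretic analysis developed in the next section.
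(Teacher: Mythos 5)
The backward direction matches the paper's logic exactly: split $B$ and $Q$ give the trivial class in $H^1(F,T)$, so $U \cong T$ has the identity as a rational point. The nontrivial content is the forward direction, where the paper simply cites Proposition 4 of \cite{VosKla84}: a $T$-toric variety has a rational point if and only if its open $T$-torsor is trivial. That toric result handles, in one stroke, the possibility that the given point lies on the boundary $Z$. Your first approach substitutes Manin's theorem that a del Pezzo surface of degree at least $5$ with a rational point is $F$-rational, hence $S(F)$ is Zariski-dense and must meet $U$; this is a genuinely different but valid route, trading the toric input for a classical rationality theorem. Your second approach is conceptually appealing but incomplete in two ways: (a) the correct target is the Weil restriction $R_{K/F}(SB(B))$, not $SB(B)$ ``regarded as an $F$-scheme'' (the latter has no $F$-points at all when $K/F$ is a proper extension, since a section of the structure morphism would force $K \subseteq F$), and over $\overline{F}$ the morphism to it is the embedding $\overline{S} \hookrightarrow \P^2 \times \P^2$ rather than a single blow-down; and (b) as you yourself note, the identification of this twisted target with $R_{K/F}(SB(B))$ is precisely what Section \ref{secK_0} establishes via the sheaf $\cI$, so it is not available at the point in the paper where Corollary \ref{splitCor} appears. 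The Manin-based argument is a sound alternative; the descent-of-blowdowns argument would require reorganizing the paper.
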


\begin{proof}
Since $S (B, Q, KL)$ is a $T$-toric variety for a two dimensional
torus $T$, $S$ has a rational point if and only if the
corresponding $T$-torsor $U$ is a trivial torsor (cf. Proposition
4 of \cite{VosKla84}). By Theorem \ref{torus}, this occurs
precisely when $B$ and $Q$ are split.
\end{proof}

\begin{remark}
If $S_0$ is a $T$-toric model (i.e. the $T$-torsor $U \subset S_0$
is trivial), then the map $H^1(F, T) \ra H^1(F, \Aut_F (S))$
induced by $T \hookrightarrow \Aut_F(S)$ takes a $T$-torsor $U$ to
the surface determined by $U$ and the $\Gamma$-action on the fan
determined by the pair $(K, L)$. Thus for any surface $S$, the
elements of $H^1(F, T)$ in the fiber of the isomorphism class of
$S$ determine the possible non-isomorphic $T$-toric structures on
$S$, where $T$ is the connected component of the identity of the
algebraic group $\Aut_F(S)$. In terms of the algebras $B$ and $Q$,
the map $H^1(F, T) \ra H^1(F, \Aut_F (S))$ forgets the
$KL$-algebra structure of $B$ and $Q$, preserving only the
$F$-algebra structure and the embedding of $KL$ into $B$ and $Q$.

The group $\Aut_F(K)$ always has order 2, but the group $\Aut_F(L)$
can have order 1, 2, 3, or 6. If $\Aut_F(L)$ has order less than 6,
then the orbit of $(B,Q, KL)$ in $H^1(F,T)$ contains at most 6
elements. It $\Aut_F(L)$ has order 6, then $L= F^3$ is not a field,
and thus $B$ is necessarily split. If $B$ is split, then the pair
$(B,Q, KL) \in H^1(F,T)$ is fixed by the subgroup $\Aut_F(K)$ of
$G(F)$, and so again the $G(F)$-orbit of $(B,Q, KL)$ in $H^1(F,T)$
has at most 6 elements. Thus for a del Pezzo surface $S$ of degree
6, there at most 6 non-isomorphic $T$-toric structures on $S$.
\end{remark}


\begin{remark}
We would like to relate this characterization of del Pezzo surfaces
of degree 6 by triples $(B,Q, KL)$ with the characterization by
triples $(B, \tau, L)$ found in \cite{ColKarMer07}. A triple $(B,
\tau, L)$ is an Azumaya $K$-algebra $B$ of rank 9, a unitary
involution $\tau$ on $B$, and a cubic \'{e}tale $F$-algebra $L$ such
that $L \subset \Sym(B, \tau)$. Two triples $(B, \tau, L)$ and $(B',
\tau', L')$ are isomorphic if there is an $F$-algebra isomorphism
$\phi: B \ra B'$ such that $\tau'\phi = \phi\tau$ and $\phi(L) =
L'$.

So let $(B, Q, KL)$ be a triple as in Theorem \ref{main}. Then $B$
is an Azumaya $K$-algebra of rank 9, and is classified up to
isomorphism as an $F$-algebra. As $B_L$ is split, so $B$ contains
$KL$, and hence $L$, as a subalgebra. Since $\cor_{K/F} (B)$ is
split, we know that $B$ has a unitary involution. Moreover, since
$L$ is an \'{e}tale cubic extension of $F$ contained in $B$, there
is some unitary involution $\tau$ such that $L \subset \Sym (B,
\tau)$, by Proposition \ref{involution}. So the $B$ and $L$ in our
characterization match with the $B$ and $L$ described in
\cite{ColKarMer07}. The rest of the remark seeks to relate $Q$ and
the involution $\tau$. That is, we want to classify all triples $(B,
\tau, L)$ with $B$ and $L$ fixed. This should correspond to fixing
$K$, $L$, and $B$, and trying to determine all possible $Q$.

As $K/F$ and $KL/L$ are cyclic,
\[\Br(K/F) \cong F^\times/\NK
\] and
\[\Br(KL/L) \cong L^\times/\NKL. \]
The restriction homomorphism $\res_{L/F}: \Br(F) \ra \Br(L)$ sends
the subgroup $\Br(K/F)$ to $\Br(KL/L)$. As $K$ and $L$ have
coprime degrees, $\res_{L/F}|_{\Br(K/F)}: \Br(K/F) \ra \Br(KL/L)$
is injective, and the cyclic algebra $Q$ corresponds to an element
of $\Br(KL/L) / \res_{L/F}(\Br(K/F) )$, i.e. an element of
\begin{align*}
&\ L^\times/ \NKL \Big / \res_{L/F} \Big( F^\times / \NK \Big )  \\
=&\ L^\times/ \NKL \Big / F^\times \NKL / \NKL  \\
\cong &\ L^\times/ F^\times \NKL.
\end{align*}

If $\tau$ is a unitary involution on $B$ which is the identity on
$L$, and $u \in L^\times$, then $\tau_u : = \Int(u) \circ \tau$ is
also a unitary involution on $B$ fixing $L$. Moreover, $\tau_u$ is
conjugate to $\tau_v$ if $uv^{-1} \in F^\times\NKL$. Thus, after a
choice of a particular involution $\tau$, we have a morphism of
pointed sets from $L^\times/ F^\times \NKL$ to the set of
conjugacy classes of unitary involutions of $B$ which are the
identity on $L$, sending $u$ to $\tau_u$. By Corollary 19.3 of
\cite{KnuMerRosTig98}, this map is a surjection. So we have a
surjective map from $L^\times/ F^\times \NKL$ to the set of
isomorphism classes of triples $(B, \tau, L)$.

Theorem \ref{main} should say that the fibers of this surjection
should be the orbit of the group $\Aut_F(L)$ in $L^\times/
F^\times \NKL$, corresponding to the $\Aut_F(L)$-orbit of $(B,
Q)$. However, to make this statement correct, we need to choose a
particular involution $\tau$ on $B$. It is not clear in general
what this involution should be. The involution $\tau$ should be
chosen so that the surface $S (B, \tau, L)$ should correspond to
the pair $(B, M_2(L))$. In particular, if $B = M_3(K)$ is split,
the surface described by the triple $(M_3(K), \tau, L)$ should
have a rational point, by Corollary \ref{splitCor}. The next
remark constructs the involution in this case.
\end{remark}

\begin{remark}
Given $K$ and $L$, we will find a triple $(B, \tau, L)$, (i.e.  a
central simple algebra of degree 3 over $K$ with an involution
$\tau$ such that $L \subset \Sym (B, \tau)$,) so that the
corresponding del Pezzo surface $S(B, \tau, L)$ constructed in
\cite{ColKarMer07} has a rational point.

As $KL$ is a three dimensional vector space over $K$, $B :=
\End_{K}(KL)$ is a Azumaya $K$-algebra of rank 9. Left
multiplication by an element of $KL$ determines an embedding of
$KL$ into $B$. If $\sigma$ is the nontrivial $L$-automorphism of
$KL$, $h(x,y) = \Tr_{KL/K} (\sigma(x) y )$ defines a hermitian
form on $KL$. This hermitian form on $KL$ induces an involution of
the second kind $\tau$ on $B$, such that $L \subset \Sym (B,
\tau)$. So we have a triple $(B, \tau, L)$. Let $S$ denote the
corresponding del Pezzo surface of 6, constructed in
\cite{ColKarMer07}.

I claim that $S$ contains a rational point. According to
\cite{ColKarMer07}, it suffices to show that there is a right
ideal $I$ of $B$ of reduced dimension 1 such that $(I \cdot
\tau(I)) \cap \Sym (B, \tau) \subset F \oplus L^\perp$, where
$L^\perp = \{x \in \Sym (B, \tau) | \Trd (l x) = 0, \ \mathrm{for\
all}\ l \in L \}$.

Let $W = \spann_K(1) \subset LK$, so that $W$ is a one dimensional
$K$-subspace of $KL$. and then $I = \Hom_{K}(KL,W)$ is a right
ideal of $B$ of reduced dimension 1, generated by the linear map
$t = \Tr_{KL/K}: KL \ra K \hookrightarrow KL$. We want to show
that $t \in \Sym (B,\tau)$. First, note that for any $x \in KL$,
$\Tr_{KL/K} (\sigma(x)) = \sigma( \Tr_{LK/L}(x) )$. If $x, y \in
KL$,
\begin{align*}
h(x,t(y)) & = \Tr_{KL/K}(\sigma(x)\Tr_{KL/K}(y)) \\
& =  \Tr_{KL/K}(\sigma(x))\Tr_{KL/K}(y)  \\
& =  \Tr_{KL/K}(\sigma(\Tr_{KL/K}(x))y) \\
& =  h(t(x),y).
\end{align*}
So $t \in \Sym(B, \tau)$, which implies that $\tau(I)$ is a left
ideal of $B$, also generated by $t$. Thus, $I \cdot \tau(I) = t B
t$.

In order to prove $I \cdot \tau(I) \cap \Sym (B, \tau) \subset F
\oplus L^\perp$, it suffices to consider the case where $L = F^3$
is split. So we can choose a basis $e_1, e_2, e_3$ of idempotents
for $KL$ over $K$.  In this basis, $\tau$ is the standard adjoint
involution, $\Sym (B, \tau)$ is the set of hermitian matrices, and
$F \oplus L^\perp$ is the set of hermitian matrices where the
diagonal entries agree. Moreover, $t$ is the matrix with ones in
every entry, so $t \in F \oplus L^\perp$. A direct calculation
shows $I \cdot \tau(I) = \spann_K(t)$, and hence  $(I \cdot
\tau(I)) \cap \Sym (B, \tau) = \spann_F(t) \subset F \oplus
L^\perp$.
\end{remark}

\section{$K_0$ of del Pezzo Surfaces}
\label{secK_0}

Let $S = S(B, Q, KL)$ be a del Pezzo surface of degree 6, a
$T$-toric variety for a two dimensional torus $T$. Let $Z \subset
S$ be the closed variety such that $\overline{Z}$ is the union of
six lines $l_0$, $l_1$, $l_2$, $m_0$, $m_1$, and $m_2$. Recall the
exact sequence (\ref{PicardModule}) from Section
\ref{secToricVar}, where $\Z[KL/F]$ is the lattice of connected
components of $\overline{Z} \subset \overline{S}$, the lines $l_i$
and $m_i$, and the homomorphism $\Z[KL/F] \ra \Pic(\overline{S})$
sends each line to the corresponding invertible sheaf on
$\overline{S}$. From the exact sequence (\ref{GammaModule}), we
see that $\widehat{T}$ is the subgroup of $\Z[KL/F]$ generated by
$l_0 - l_1 - (m_0 - m_1)$, $l_0 - l_2 - (m_0 - m_2)$, and $l_1 -
l_2 - (m_1 - m_2)$. Note that any one of these 3 generators can be
expressed as a linear combination of the other 2. So $\Pic
(\overline{S})$ is generated by the invertible sheaves
$\cL(-l_i)$, $\cL (-m_j)$, and we have that the invertible sheaves
$\cL (-l_i - m_j)$ and $\cL (-l_j - m_i)$ are isomorphic for $i, j
= 0, 1, 2$. 

There is another way to recover these generators and relations,
which does not depend on the theory of toric varieties. Recall
that there is a morphism $p_1:\overline{S} \ra \P^2$, obtained by
blowing down the lines $m_0$, $m_1$, and $m_2$. If $x_0$, $x_1$,
$x_2$ are the homogeneous coordinates of $\P^2$ and $D_i = \{x_i =
0\}$ for $i = 0, 1, 2$, then $D_0$, $D_1$, and $D_2$ are all
linearly equivalent divisors on $\P^2$, and thus their strict
transforms $m_1 + l_0 + m_2$, $m_0 + l_1 + m_2$, and $m_0 + l_2 +
m_1$ are all linearly equivalent divisors on $\overline{S}$.
Therefore the corresponding invertible sheaves $\cL (-m_1 - l_0 -
m_2)$, $\cL (-m_0 - l_1 - m_2)$, and $\cL (-m_0 - l_2 - m_1)$ on
$\overline{S}$ are isomorphic. From this we can conclude that
$\Pic(\overline{S})$ is generated by the invertible sheaves
$\cL(-m_0)$, $\cL(-m_1)$, $\cL(-m_2)$, $\cL (-m_1 - l_0 - m_2)$,
$\cL (-m_0 - l_1 - m_2)$, and $\cL (-m_0 - l_2 - m_1)$, and we
have that $\cL (-l_i - m_j - l_k)$ and $\cL (-l_j - m_i - l_k)$
are isomorphic for any $i, j = 0, 1, 2$. This presentation is
equivalent to that in the previous paragraph. Similarly, this
presentation can be obtained by considering the morphism $p_2:
\overline{S} \ra \P^2$ obtained by blowing down the lines $l_0$
$l_1$, and $l_2$.

We define the following locally free sheaves on $\overline{S}$:
\begin{align*}
\cI_1 &= \cL (-m_1 - l_0 - m_2) \oplus \cL (-m_0 - l_1 - m_2)
\oplus \cL(-m_0 - l_2 - m_1) \\
\cI_2 & = \cL (-l_1 - m_0 - l_2) \oplus \cL (-l_0 - m_1 - l_2)
\oplus \cL (-l_0 - m_2 - l_1) \\
\cJ_1 &= \cL (-l_0 - m_1) \oplus \cL (-l_1 - m_0) \\
\cJ_2 &= \cL (-l_0 - m_2) \oplus \cL (-l_2 - m_0) \\
\cJ_3 &= \cL  (-l_1 - m_2) \oplus \cL (-l_2 - m_1).
\end{align*}
The $\Gamma$-action on the hexagon of lines induces an action on the
locally free sheaves $\cI_1 \oplus \cI_2$ and $\cJ_1 \oplus \cJ_2
\oplus \cJ_3$, compatible with the action on $\overline{S}$. Therefore
$\cI_1 \oplus \cI_2$ and $\cJ_1 \oplus \cJ_2 \oplus \cJ_3$ descend to
sheaves $\cI$ and $\cJ$ on $S$.

We will consider the following endomorphism rings: $B' =
\End_{\cO_S} (\cI) \op$, and $Q' = \End_{\cO_S} (\cI)\op$. As $S$
is projective, $\End_{\cO_S}(\cO_S)\op = F$, and since $\cI$ and
$\cJ$ are $\cO_S$-modules, it follows that $B'$ and $Q'$ are
$F$-algebras. For $i$, $j$, and $k$ not equal,
$\End_{\cO_{\overline{S}}}(\cL (-m_i - l_j - m_k)) =
\End_{\cO_{\overline{S}}}(\cL (-l_i - m_j - l_k)) = \overline{F}$,
so we see that $\overline{F}^6$ embeds diagonally into
$\End_{\cO_{\overline{S}}} (\cI_1 \oplus \cI_2)$. Moreover, since
$\cL (-l_i - m_j)$ and $\cL (-l_j - m_i)$ are isomorphic for any
$i$, $j$, $\cI_1 \oplus \cI_2 = (\cL (-m_1 - l_0 - m_2) \oplus \cL
(-l_1 - m_0 - l_2)) \tens_{\overline{F}} V$, where $V$ is an
$\overline{F}$-vector space of dimension 3. An element of
$\Hom_{\cO_{\overline{S}}}(\cL (-m_1 - l_0 - m_2), \cL (-l_1 - m_0
- l_2) )$ is given by a global section of $\cL (m_2 - l_2)$. Any
non-zero global section of $\cL (m_2 - l_2)$ would give a function
defined on a neighborhood of $l_2 \subset \overline{S}$ with
vanishing set $l_2$. Blowing down the lines $l_i$, this function
would then correspond to a function defined on an open subset of
$\P^2$ with vanishing set a point, which is impossible, since a
point is a codimension 2 subvariety of $\P^2$. Thus $\cL (m_2 -
l_2)$ has no nonzero global sections. Similarly
$\Hom_{\cO_{\overline{S}}} ( \cL (-l_1 - m_0 - l_2), \cL (-m_1 -
l_0 - m_2) ) = 0$, and so $\End_{\cO_{\overline{S}}}(\cI_1 \oplus
\cI_2) = \End_{\cO_{\overline{S}}}(\cL (-m_i - l_j - m_k) \times
\cL (-l_i - m_j - l_k)) \tens_{\overline{F}} \End_{\overline{F}}
(V) = \End_{\overline{F}^2}(V_{\overline{F}^2})$. So the center of
$\End_{\cO_{\overline{S}}} (\cI_1 \oplus \cI_2)$ is a copy of
$\overline{F}^2$, contained in $\overline{F}^6$. This chain
$\overline{F}^2 \subset \overline{F}^6 \subset
\End_{\cO_{\overline{S}}}(\cI_1 \oplus \cI_2)$ descends to $K
\subset KL \subset \End_{\cO_S}(\cI)$, with $K$ the center of
$\End_{\cO_S}(\cI)$.

Now let $E$ be any separable field extension of $F$ over which the
lines $l_i$, $m_j$ are defined. This is equivalent to $E$ splitting
both $K$ and $L$. the above arguments show that $\End_{\cO_S}(\cI)
\tens_F E \approx M_3(E^2)$, where $E^2 \approx K \tens_F
E$. Therefore, we conclude that $B'$ is an Azumaya $K$-algebra of rank
9 which contains $KL$ as a subalgebra. A similar argument shows that
$Q'$ is an Azumaya $L$-algebra of rank 4 which also contains a copy of
$KL$.

\begin{theorem}
$B' = \End_{\cO_S}(\cI)\op$ and  $B$ are isomorphic as $K$-algebras.
Similarly, $Q' = \End_{\cO_S}(\cJ)\op$ and $Q$ are isomorphic as
$L$-algebras.
\end{theorem}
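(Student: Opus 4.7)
The plan is to verify that the triple $(B', Q', KL)$ classifies the same del Pezzo surface $S$ as $(B, Q, KL)$ does under Theorem \ref{main}, from which the desired $K$-algebra isomorphism $B' \cong B$ and $L$-algebra isomorphism $Q' \cong Q$ will follow. It has already been established that $B'$ is an Azumaya $K$-algebra of rank $9$ containing $KL$, and similarly that $Q'$ is an Azumaya $L$-algebra of rank $4$ containing $KL$.

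First I would check the remaining hypothesis on corestrictions, namely that $\cor_{K/F}(B')$ and $\cor_{L/F}(Q')$ are split. For $B'$, the generator of $\Aut_F(K)$ acts on $\overline{S}$ by exchanging the $l_i$ with the $m_i$, and hence swaps the subsheaves $\cI_1$ and $\cI_2$. Consequently $N_{K/F}(\cI)$ descends from $\cI_1 \tens_{\cO_{\overline{S}}} \cI_2$, and expanding this tensor product using the intersection pairing of Proposition \ref{delPezzo}(iii) should yield a direct sum of isomorphic copies of a single line bundle on $S$ (related to $\omega_S^{-1}$). Then $\cor_{K/F}(B') = \End_{\cO_S}\bigl(N_{K/F}(\cI)\bigr)\op$ is a matrix algebra over $F$, hence split; an analogous computation with $\cJ$ handles $\cor_{L/F}(Q')$.

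Second, and this is the crux, I would identify $(B', Q', KL)$ with $(B, Q, KL)$ as elements of $H^1(F, T)$ up to $G(F)$-action. The $T$-torsor $U = S \setminus Z$ determines a class $\xi \in H^1(F, T)$, and via the isomorphism $\psi$ this produces the pair $([B], [Q]) \in \Br(K) \times \Br(L)$. Since $\cI$ and $\cJ$ are built from line bundles indexed by the $\Gamma$-invariant combinatorics of $Z$, the Brauer classes $[B'] \in \Br(K)$ and $[Q'] \in \Br(L)$ are obtained by tracing $\xi$ through the connecting homomorphisms attached to the exact sequence (\ref{eq:toric}) and its cubic analog. I would make this explicit by passing to a splitting field $E$ for $K$ and $L$: over $E$ the sheaves $\cI, \cJ$ split into the indicated line bundles and the endomorphism algebras become matrix algebras, and the Galois descent cocycle then coincides on the nose with the cocycle defining $[B]$ and $[Q]$ under $\psi$.

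Once $[B'] = [B] \in \Br(K)$, the two Azumaya $K$-algebras of equal rank are forced to be $K$-isomorphic, and likewise $Q' \cong Q$ over $L$. The main obstacle is the second step: the explicit matching of the two cocycles. A cleaner alternative, if workable, would be to bypass the cocycle computation by exhibiting a universal property for $\cI$ and $\cJ$ among $\cO_S$-sheaves — for example as the indecomposable summands of a canonical tilting-type bundle on $\overline{S}$ assembled from the hexagon — which would force their endomorphism algebras to realize $B$ and $Q$ directly.
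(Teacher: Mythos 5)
Your high-level plan --- relate the Brauer class of $B'$ to the torsor class $\xi \in H^1(F,T)$ and conclude $[B'] = [B]$ in $\Br(K)$, hence $B' \cong B$ since both are Azumaya of the same rank --- matches the paper's. The final step (equal Brauer classes plus equal rank forces $K$-isomorphism) is exactly how the paper concludes. But the mechanism you call the ``crux'' --- tracing $\xi$ through connecting homomorphisms and matching cocycles --- is precisely where your proposal has a gap: you assert that ``the Galois descent cocycle coincides on the nose with the cocycle defining $[B]$ and $[Q]$,'' but give no argument. This identification \emph{is} the theorem. The paper supplies it by constructing a specific map of short exact sequences of algebraic groups: the sequence (\ref{eq:toric2}), $1 \ra R^{(1)}_{K/F}(\gm) \ra \mathbf{G}_L \ra T \ra 1$, maps into $1 \ra R_{K/F}(\gm) \ra R_{K/F}(\gGL(B_0')) \ra R_{K/F}(\gPGL(B_0')) \ra 1$ (where $B_0'$ is the endomorphism algebra on the toric model $S_0$), induced by the embedding $KL \hookrightarrow B_0'$; taking $H^1$ produces a commutative square whose commutativity immediately yields $[B'] = [B]$. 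Without exhibiting such a compatible map of group extensions, the ``coincides on the nose'' claim is unsupported.

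Two further concerns with the route you sketch. First, your framing via Theorem \ref{main} would only produce $(B',Q',KL) = g\cdot(B,Q,KL)$ for some $g \in G(F)$, which a priori gives $B' \cong B$ \emph{or} $B' \cong B\op$ as $K$-algebras; the paper's direct computation of $[B']$ in $\Br(K)$ avoids this ambiguity. (The two are in fact isomorphic since $\cor_{K/F}(B)$ is split, but this needs to be said.) Second, the preliminary corestriction check you outline, while plausible (and in the spirit of a calculation the author considered but discarded --- computing $N_{K/F}(\cI) \cong (\omega_S^{-1})^{\oplus 9}$), is unnecessary under the paper's approach: once $[B'] = [B]$ is shown, $\cor_{K/F}(B')$ being split is automatic. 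Your fallback suggestion of a ``universal property'' or tilting-bundle characterization of $\cI$ and $\cJ$ is not developed and cannot substitute for the missing cocycle argument as written.
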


\begin{proof}
Let $\widetilde{S}$ be as in Proposition \ref{delPezzo}, and let
$\widetilde{\cI}$ and $\widetilde{\cJ}$ be the sheaves associated to
$\widetilde{S}$ as above.  Twisting $\widetilde{\cI}$ and
$\widetilde{\cJ}$ by the $\Gamma$-action corresponding to the pair
$(K,L)$, we get sheaves $\cI_0$ and $\cJ_0$ associated to the
$T$-toric model $S_0$. Let $B'_0 = \End_{\cO_S}(\cI_0)\op$ and $Q'_0
= \End_{\cO_S}(\cJ_0)\op$.  The embeddings of $KL$ into $B'_0$ and
$Q'_0$ described above induce the following commutative diagrams
(cf. (\ref{eq:toric2})):

\[ \xymatrix{
1 \ar[r] & R^{(1)}_{K/F} (\gm) \ar[r] \ar[d] & \mathbf{G}_L \ar[r] \ar[d] & T \ar[r] \ar[d] & 1 \\
1 \ar[r] & R_{K/F} (\gm) \ar[r] & R_{K/F} (\gGL(B_0')) \ar[r] &
R_{K/F} (\gPGL(B_0')) \ar[r] & 1, }\] and
\[ \xymatrix{
1 \ar[r] & R^{(1)}_{L/F} (\gm) \ar[r] \ar[d] & \mathbf{G}_K \ar[r] \ar[d] & T \ar[r] \ar[d] & 1 \\
1 \ar[r] & R_{L/F} (\gm) \ar[r] & R_{L/F} (\gGL(Q_0')) \ar[r] &
R_{L/F} (\gPGL(Q_0')) \ar[r] & 1. }\]

The first diagram induces the following commutative diagram of
cohomology sets:
\[\xymatrix{
& H^1(F, T) \ar[r] \ar[d] & \Ker (\cor_{K/F}: \Br(K) \ra \Br (F)) \ar[d] \\
1 \ar[r] & H^1(K, \gPGL(B_0')) \ar[r] & \Br(K). }\]

The left vertical arrow sends the triple $(B, Q, KL)$ to the
endomorphism ring $B' = \End(\cI)\op$, where $\cI$ is the sheaf
associated to the surface $S(B,Q, KL)$ constructed above. The
upper horizontal arrow sends the triple $(B,Q, KL)$ to the class
of $[B] \in \Br(K)$, which lands in the subgroup of elements of
trivial norm. The right vertical map is the inclusion
homomorphism. The lower horizontal arrow sends a $K$-algebra to
its corresponding element in $\Br(K)$. By the commutativity of the
diagram, we see that $[B] = [B']$ in $\Br(K)$. But these algebras
have the same rank, so they must be isomorphic as $K$-algebras. A
similar argument shows that $Q' = \End_{\cO_S}(\cJ)\op$ and $Q$
are isomorphic as $L$-algebras.
\end{proof}

Let $A$ be the separable $F$-algebra $F \times B \times Q$, so
that $\mathbf{P} (A) = \mathbf{P} (F) \times \mathbf{P} (B) \times
\mathbf{P} (Q)$. Define the exact functors $u_F$ from
$\mathbf{P}(F)$ to $\mathbf{P}(S)$ by $M_1 \mapsto \cO_S \tens_F
M_1$, $u_B$ from $\mathbf{P} (B)$ to $\mathbf{P} (S)$ by $M_2
\mapsto \cI \tens_{B} M_2$, and $u_Q$ from $\mathbf{P} (Q)$ to
$\mathbf{P}(S)$ by $M_3 \mapsto \cJ \tens_{Q} M_3$. If we set $\cP
= \cO_S \oplus \cI \oplus \cJ$, then the respective right actions
of $F$, $B$, and $Q$ on $\cO_S$, $\cI$, and $\cJ$ combine to give
a right action of $A = F \times B \times Q$ on $\cP$. Therefore,
we can define an exact functor from $\mathbf{P}(A)$ to
$\mathbf{P}(S)$ by sending $M$ to $\cP \tens_{A} M$. This exact
functor induces a homomorphism:
\[ \phi: K_0(A) \ra K_0 (S).\]

More generally, if $Y$ is any $F$-variety, then we have an exact
functor from $\mathbf{P} (Y ; A)$ to $\mathbf{P} (Y \times S)$,
sending $M$ to $p_2^*(\cP) \tens_{\cO_{Y \times S} \tens_F A}
p_1^*(M)$, where $p_1:Y \times S \ra Y$ and $p_2:Y \times S \ra S$
are the projection morphisms. This induces a homomorphism $\phi_Y:
K_0(Y;A) \ra K_0(Y \times S)$. Furthermore, if $E$ is any field
extension of $F$, then $\phi$ naturally extends to a homomorphism
$\phi_E: K_0(A_E) \ra K_0 (S_E)$.

\begin{theorem}
$\phi: K_0(A) \ra K_0(S)$ is an isomorphism. \label{zero}
\end{theorem}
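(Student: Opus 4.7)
The plan is to show that $\cP = \cO_S \oplus \cI \oplus \cJ$ is a tilting sheaf on $S$ with $\End_{\cO_S}(\cP)\op \cong A$; this then induces an equivalence of bounded derived categories $D^b(\mathbf{P}(S)) \simeq D^b(\mathbf{P}(A))$ which descends on $K_0$ to the claimed isomorphism. The previous theorem already identifies $\End_{\cO_S}(\cI)\op \cong B$ and $\End_{\cO_S}(\cJ)\op \cong Q$, so the first subgoal is to verify the three cross-vanishings $\Hom_{\cO_S}(\cO_S, \cI) = \Hom_{\cO_S}(\cO_S, \cJ) = \Hom_{\cO_S}(\cI, \cJ) = 0$ together with their symmetric counterparts. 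Each such $\Hom$ group is, after extending scalars to $\overline{F}$, a sum of global sections of invertible sheaves $\cL(D)$ on $\overline{S}$ with $D$ a divisor supported on the hexagon of lines; all of these vanish because pushing forward via one of the blow-down maps $p_1$ or $p_2$ to $\P^2$ would otherwise produce a regular function on an open subset of $\P^2$ whose zero locus has codimension at least $2$.

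Next, I would verify the tilting vanishing $\Ext^i_{\cO_S}(\cP, \cP) = 0$ for $i > 0$. By flat base change this reduces to showing $H^i(\overline{S}, \cL) = 0$ for $i > 0$ for each of the finitely many invertible summands $\cL$ of $\cP^\vee \tens_{\cO_S} \cP$. These cohomologies can be computed case by case either via the Leray spectral sequence for $p_1$ or $p_2$, reducing everything to standard cohomology of line bundles on $\P^2$, or by invoking Orlov's blow-up theorem, which extends the line bundles appearing in the Morita-reduction of $\cP_{\overline{F}}$ to a full strong exceptional collection on $\overline{\widetilde{S}}$ and therefore forces all higher self-Ext groups to vanish.

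Once both inputs are in place, the functor $-\tens_A^L \cP \colon D^b(\mathbf{P}(A)) \to D^b(\mathbf{P}(S))$ is an equivalence of triangulated categories with quasi-inverse $R\Hom_{\cO_S}(\cP, -)$, and passing to Grothendieck groups realizes $\phi$ as an isomorphism $K_0(A) \iso K_0(S)$. The main obstacle will be the Ext-vanishing of Step 2: one must enumerate all line-bundle summands of $\cP^\vee \tens_{\cO_S} \cP$ and verify their cohomology uniformly, keeping track of the Galois descent data when passing back from $\overline{F}$ to $F$. If this direct attack becomes unwieldy, the cleanest alternative is to work inside the $K$-motivic category $\cC$ of \cite{Pan94}: the sheaf $\cP$ furnishes a morphism $A \to S$ in $\cC$ whose invertibility can be checked after extending scalars to a splitting field of both $B$ and $Q$, where the question reduces to the classical blow-up formula for $K_0$ of a del Pezzo surface of degree $6$.
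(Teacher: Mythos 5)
Your Step 1 is incorrect: the cross-vanishing you need, in particular $\Hom_{\cO_S}(\cI,\cO_S)=0$, fails. Over $\overline F$ the summand $\cL(-m_1-l_0-m_2)$ of $\cI_1$ is $p_1^*\cO_{\P^2}(-1)$, so $\Hom(\cL(-m_1-l_0-m_2),\cO_{\overline S})=H^0(\overline S,p_1^*\cO_{\P^2}(1))$ is $3$-dimensional. Similarly $\Hom(\cJ,\cO_S)\neq 0$ and $\Hom(\cI,\cJ)\neq 0$ (e.g.\ $\Hom(\cL(-m_1-l_0-m_2),\cL(-l_0-m_1))=H^0(\cL(m_2))\neq 0$). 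The maps do vanish in the \emph{other} direction ($\Hom(\cO_S,\cI)=\Hom(\cO_S,\cJ)=\Hom(\cJ,\cI)=0$), so $\End_{\cO_S}(\cP)\op$ is a lower-triangular tiled algebra with $F$, $B$, $Q$ on the diagonal, not the product $A=F\times B\times Q$. Consequently the identification $\End_{\cO_S}(\cP)\op\cong A$ and with it the direct tilting equivalence $D^b(\mathbf P(A))\simeq D^b(\mathbf P(S))$ does not hold as you wrote it; at minimum you would need to pass from $\End(\cP)\op$ to its semisimplification $A$ and then argue that the induced identification of $K_0$'s agrees with the specific map $\phi$ coming from $M\mapsto\cP\tens_A M$, which is extra work you have not done. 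Note the paper never claims $\End(\cP)\op\cong A$; it only uses that $A$ acts on $\cP$ (each factor on the corresponding summand), which is enough to define $\phi$ but not to run a Morita/tilting argument off the shelf.

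Your fallback in the $K$-motivic category $\cC$ points in the right direction but is stated too loosely. You say invertibility of $u\colon A\to S$ in $\cC$ "can be checked after extending scalars to a splitting field," but $K_0$ does not satisfy descent in that naive sense: neither $K_0(A)\to K_0(A_E)$ nor $K_0(S)\to K_0(S_E)$ is an isomorphism in general, so an isomorphism after base change does not automatically descend. This is exactly the difficulty the paper's proof is organized to overcome: it first proves surjectivity and injectivity over $\overline F$ by explicit computation with the codimension filtration and the blow-up structure; then handles the case $B,Q$ split by comparing $K_0(S)$ to $K_0(\overline S)^\Gamma$ via the rank/determinant filtration and the Snake Lemma, using that $S$ then has a rational point; then deduces surjectivity in general by a localization induction over the splitting variety $Y=R_{K/F}(SB(B))\times R_{L/F}(SB(Q))$ (Propositions \ref{surjection} and \ref{splitThm}); and finally deduces injectivity by mapping into $K_0(A_E)\to K_0(S_E)$ for some $E$ with $S(E)\neq\emptyset$. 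You would need to reproduce, or substitute for, each of these steps to make the fallback rigorous.
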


We will prove this in several stages. Let us first consider the
case where $F$ is separably closed. By Proposition \ref{delPezzo},
$S$ is isomorphic to the blow up of the projective plane at the 3
non-collinear points $[1:0:0]$, $[0:1:0]$, and $[0:0:1]$. Recall
that we have the filtration $0 = K_0 (S)^{(3)} \subset K_0
(S)^{(2)} \subset K_0 (S)^{(1)} \subset K_0 (S)^{(0)} = K_0 (S)$
by codimension of support, and homomorphisms $CH^i(S) \ra K_0
(S)^{(i/i+1)}$, which send the class of a subvariety $V$ to the
equivalence class $[\cO_V]$. These homomorphisms are isomorphisms
for $i = 0, 1, 2$. So by Proposition \ref{delPezzo}, if $P$ is a
rational point of $S$, $K_0 (S)$ is generated by $[\cO_S]$,
$[\cO_{l_0}]$, $[\cO_{l_1}]$, $[\cO_{l_2}]$, $[\cO_{m_0}]$,
$[\cO_{m_1}]$, $[\cO_{m_2}]$, and $[\cO_P]$. Moreover, as $\CH^0
(S)$, $\CH^1 (S)$, and $\CH^2 (S)$ are free abelian groups with
ranks 1, 4, and 1, respectively, $K_0(S)$ is free abelian with
rank 6. Since $F$ is separably closed, $K$, $L$, $B$, and $Q$ are
split, and thus $K_0(A)$ is also free abelian of rank 6. Therefore
$\phi$ will be an isomorphism provided it is surjective. So it
suffices to show that $[\cO_S]$, $[\cO_{l_0}]$, $[\cO_{l_1}]$,
$[\cO_{l_2}]$, $[\cO_{m_0}]$, $[\cO_{m_1}]$, $[\cO_{m_2}]$, and
$[\cO_P]$ are in the image of $\phi$.

Clearly, $[\cO_S] = [\cO_S \tens_F F]$ is in the image of $\phi$.
As $F$ is separably closed, $\cI = (\cL (-m_1 - l_0 - m_2) \oplus
\cL (-l_1 - m_0 - l_2)) \tens_F V$, where $V$ is an $F$-vector
space of dimension 3, $K = F \times F \cong \End_{\cO_S}(\cL (-m_1
- l_0 - m_2)) \times \End_{\cO_S} ( \cL (-l_1 - m_0 - l_2) )$, and
$\End_{\cO_S}(\cI)\op \cong \End_{F^2}(V_{F^2})$.

Now $\Hom_{F^2}(V_{F^2},F \times 0)$ is a right
$\End_{\cO_S}(\cI)$-module, and thus a left $A$-module, where the
$F$ and $Q$ component of $A = F \times B \times Q$ act trivially.
Therefore,
\begin{align*}
\phi \Big(\Hom_{F^2}(V_{F^2},F \times 0)\Big) &= \Big[
\cI \tens_B \Hom_{F^2}(V_{F^2}, F \times 0) \Big] \\
& = \Bigg[ \Big(\cL (-m_1 - l_0 - m_2) \oplus \cL (-l_1 - m_0 -l_2)
\Big) \\ & \tens_{F^2} (F \times 0)\Bigg] \\
& = [\cL (-m_1 - l_0 - m_2)],
\end{align*}
where we use Morita equivalence in the second line. A mirror
argument shows that $\phi\Big(\Hom_{F^2}(V_{F^2}, 0 \times F)\Big)
= [\cL (-l_1 - m_0 - l_2)]$, and a similar argument applied to
$\cJ$ and $Q$ shows that $[\cL (-l_0 - m_1)]$, $[\cL (-l_0
  - m_2)]$, and $[\cL (-l_1 - m_2)]$ are in the image of $\phi$.

Now let $i, j \in \{0,1,2\}$ and not equal. By Proposition
\ref{delPezzo}, the lines $l_i$ and $m_j$ have intersection a
rational point $P$ of $S$, with multiplicity 1, the lines $m_i$
and $m_j$ are skew, and the lines $l_i$ and $l_j$ are skew. Thus
we have the following resolutions of $\cO_P$ and $\cO_S$:

\begin{equation*}
0 \ra \cL(-l_i - m_j) \xra{\begin{pmatrix}\tens\cL(m_j) \\ \tens
\cL (l_i) \end{pmatrix}} \cL(-l_i) \oplus \cL (-m_j)
\xra{(\tens\cL(l_i), -\tens\cL(m_j))} \cO_S \ra \cO_P \ra 0,
\end{equation*}

\begin{equation*}
0 \ra \cL(-m_i - m_j) \xra{\begin{pmatrix}\tens \cL(m_i) \\ \tens
\cL(m_j) \end{pmatrix}} \cL(-m_i) \oplus \cL (-m_j) \xra{(\tens
\cL(m_i), -\tens \cL(m_j))} \cO_S \ra 0,
\end{equation*}
and
\begin{equation*}
0 \ra \cL(-l_i - l_j) \xra{\begin{pmatrix}\tens \cL(l_i) \\ \tens
\cL(l_j) \end{pmatrix}} \cL(-l_i) \oplus \cL (-l_j) \xra{(\tens
\cL(l_i), -\tens \cL(l_j))} \cO_S \ra 0.
\end{equation*}
In addition, when $D = l_i$ or $m_i$, we have the standard
resolution
\begin{equation*}0 \ra \cL(-D) \xra{\tens \cL(D)} \cO_S \ra \cO_D \ra 0.
\end{equation*}

So $0 = [\cO_S] - [\cL(-m_i)] - [\cL(-m_j)] + [\cL(-m_i - m_j)]$ in
$K_0(S)$. If we take $k \in \{0,1,2\}$ not equal to $i$ or $j$ and
multiply this equation by $[\cL(-l_k)]$, we see that \[0 =
[\cL(-l_k)] - [\cL(-l_k - m_i)] - [\cL(-l_k - m_j)] + [\cL(-m_i -
l_k -  m_j)].\] Therefore,
\begin{align*}[\cO_{l_k}] & =
[\cO_S] - [\cL(-l_k)] \\ &= [\cO_S] -
[\cL(-l_k - m_i)] - [\cL(-l_k - m_j)] + [\cL(-m_i - l_j - m_k)]
\end{align*}
is in the image of $\phi$. The same argument with $l$ and $m$
interchanged shows that $[\cO_{m_k}]$ is in the image of $\phi$ for
$k \in \{0, 1, 2\}$. Finally, $[\cO_P] = [\cO_S] - [\cL(-l_0)] -
[\cL(-m_1)] + [\cL(-l_0 - m_1)]$ is in the image of $\phi$, and thus
$\phi$ is surjective when $F$ is separably closed.

\begin{proposition}
$\phi: K_0(A) \ra K_0(S)$ is an isomorphism if $B$ and $Q$ are split.
\label{splitThm}
\end{proposition}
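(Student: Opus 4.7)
The plan is to reduce to the separably closed case, which is already established above, by a base-change argument. The essential new ingredient is that, under the hypothesis that $B$ and $Q$ are split, Corollary~\ref{splitCor} guarantees an $F$-rational point $P_0 \in S(F)$.

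The first step is to prove that the natural base-change map $r_S \colon K_0(S) \to K_0(\overline{S})$ is injective. Both sides carry the codimension filtration, which $r_S$ respects. By the snake lemma on the short exact sequences of successive filtration layers, injectivity of $r_S$ reduces to injectivity of each associated-graded map $\CH^i(S) \to \CH^i(\overline{S})$. For $i = 0$ this is trivial. For $i = 1$, the existence of $P_0$ kills the Brauer obstruction in the Hochschild--Serre sequence, so $\Pic(S) = \Pic(\overline{S})^\Gamma$, which is a subgroup of $\Pic(\overline{S})$. For $i = 2$, the class $[P_0]$ generates $\CH^2(S) \cong \Z$ and maps isomorphically onto $\CH^2(\overline{S}) \cong \Z$ generated by the class of any geometric point.

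On the algebra side, the hypothesis $B = M_3(K)$ and $Q = M_2(L)$ gives, via Morita equivalence, $K_0(A) \cong K_0(F) \oplus K_0(K) \oplus K_0(L)$, and likewise $K_0(\overline{A}) \cong K_0(\overline{F}) \oplus K_0(\overline{K}) \oplus K_0(\overline{L})$ with $\overline{K} \cong \overline{F}^2$ and $\overline{L} \cong \overline{F}^{\dim_F L}$. The base-change map $r_A \colon K_0(A) \to K_0(\overline{A})$ acts on each étale factor by sending a generator $[F_i]$ to the sum over the geometric points in the corresponding $\Gamma$-orbit. Hence $r_A$ is injective with image exactly $K_0(\overline{A})^\Gamma$.

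The conclusion follows from the commutative square with $\phi$ on top, $\phi_{\overline{F}}$ on the bottom (an isomorphism by the separably closed case), and $r_A, r_S$ as vertical maps; commutativity is naturality of $\phi$ in base change. Injectivity of $\phi$ is immediate: $\phi(x) = 0$ forces $\phi_{\overline{F}}(r_A(x)) = 0$, whence $r_A(x) = 0$, whence $x = 0$. For surjectivity, given $y \in K_0(S)$, the element $r_S(y)$ lies in $K_0(\overline{S})^\Gamma = \phi_{\overline{F}}(K_0(\overline{A})^\Gamma) = \phi_{\overline{F}}(\Im r_A)$, since $\phi_{\overline{F}}$ is a $\Gamma$-equivariant isomorphism; pick $x \in K_0(A)$ with $\phi_{\overline{F}}(r_A(x)) = r_S(y)$, so that $r_S(\phi(x)) = r_S(y)$, and then injectivity of $r_S$ yields $\phi(x) = y$. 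The main delicate point is the injectivity of $r_S$, which really does require the rational point supplied by Corollary~\ref{splitCor}; the rest of the argument is a formal diagram chase.
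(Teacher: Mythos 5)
This is the same argument as the paper's: both use the commutative square relating $K_0(A) \to K_0(S)$ to $K_0(\overline{A})^\Gamma \to K_0(\overline{S})^\Gamma$, deduce injectivity of $\phi$ from that of $r_A$ and deduce surjectivity from injectivity of $K_0(S) \to K_0(\overline{S})^\Gamma$, and verify that last injectivity by running the snake lemma down the codimension filtration using the rational point from Corollary~\ref{splitCor}. The only cosmetic difference is that you identify the graded pieces of the filtration with Chow groups $\CH^i$, while the paper works directly with $K_0(S)^{(i/i+1)}$ via the rank and determinant homomorphisms; these agree for $i \le 2$ (the kernel of $\CH^i(S) \to K_0(S)^{(i/i+1)}$ is killed by $(i-1)!$), a fact you should cite to make the reduction precise, and which makes your argument and the paper's equivalent on a surface.
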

\begin{proof}
By the preceding argument, $\phi_{\overline{F}}:K_0 (\overline{A})
\ra K_0 (\overline{S})$ is an isomorphism. Moreover,
$\phi_{\overline{F}}$ commutes with the action of $\Gamma$ on both
$K_0(\overline{A})$ and $K_0(\overline{S})$, and thus it descends
to an isomorphism on the $\Gamma$-invariant subgroups. Therefore,
we have the following commutative diagram:
\[
\xymatrix{
K_0(A) \ar[d]  \ar[r]^\phi  & K_0(S)   \ar[d]\\
K_0(\overline{A})^\Gamma \ar[r]^{\phi_{\overline{F}}} &
K_0(\overline{S})^\Gamma.}
\]
As $\phi_{\overline{F}}$ and the left vertical map $K_0(A) \ra
K_0(\overline{A})^\Gamma$ are isomorphisms, $\phi$ must be
injective. Moreover, if the right vertical map is injective, then
$\phi$ is surjective, and hence an isomorphism. So it suffices to
show that $K_0(S) \ra K_0(\overline{S})^\Gamma$ is injective.

To see this, note that the rank and wedge homomorphisms $\rank:
K_0(\overline{S}) \ra \Z$ and $\wedge: K_0(\overline{S})^{(1)} \ra
\Pic(\overline{S})$ commute with the action of $\Gamma$. Thus we
have the following short exact sequences of $\Gamma$-modules:
\[0 \ra K_0(\overline{S})^{(2)} \ra K_0(\overline{S})^{(1)}
\xra{\wedge} \Pic(\overline{S}) \ra 0 \] and
\[0 \ra K_0(\overline{S})^{(1)} \ra K_0(\overline{S})
\xra{\rank} \Z \ra 0. \] These sequences of $\Gamma$-modules induce
the following long exact sequences:
\[0 \ra (K_0(\overline{S})^{(2)})^\Gamma \ra (K_0(\overline{S})^{(1)})^\Gamma \xra{\wedge}
\Pic(\overline{S})^\Gamma \ra
H^1(F,K_0(\overline{S})^{(2)})\] and
\[ 0 \ra (K_0(\overline{S})^{(1)})^\Gamma \ra (K_0(\overline{S}))^\Gamma \xra{\rank}
\Z \ra H^1(F,K_0(\overline{S})^{(1)}).\]

The map $K_0(S) \ra K_0(\overline{S})^\Gamma$ induces the following
commutative diagrams:
\[\xymatrix{
0 \ar[r] & K_0(S)^{(2)} \ar[r] \ar[d] & K_0(S)^{(1)} \ar[r]^\wedge \ar[d] & \Pic(S) \ar[r] \ar[d] & 0 \\
0 \ar[r] & (K_0(\overline{S})^{(2)})^\Gamma \ar[r] &
(K_0(\overline{S})^{(1)})^\Gamma \ar[r]^\wedge &
\Pic(\overline{S})^\Gamma  \ar[r] & H^1(F,K_0(\overline{S})^{(2)}),}
\]
and
\[\xymatrix{
0 \ar[r] & K_0(S)^{(1)} \ar[r] \ar[d] & K_0(S)
\ar[r]^{\rank} \ar[d] & \Z \ar[r] \ar[d]^= & 0 \\
0 \ar[r] & (K_0(\overline{S})^{(1)})^\Gamma \ar[r] &
K_0(\overline{S})^\Gamma \ar[r]^{\rank} & \Z \ar[r] &
H^1(F,K_0(\overline{S})^{(1)}). } \]

As $B$ and $Q$ are split, $S$ has a rational point by Corollary
\ref{splitCor}. Thus the homomorphism $K_0(S)^2 \ra ( K_0 (
\overline{S} )^{(2)} )^\Gamma$ is a surjective homomorphism of
free abelian groups of rank 1, and therefore an isomorphism.
Moreover, the homomorphism $\Pic (S) \ra
\Pic(\overline{S})^\Gamma$ is injective. Thus, by applying the
Snake Lemma to the first diagram and then to the second, we see
that $K_0(S) \ra (K_0 ( \overline{S} ) )^\Gamma$ is injective.
\end{proof}

\begin{remark}
If $P$ and $P'$ are rational points of $\overline{S}$, they define
equal classes in $\CH^2(\overline{S})$ by Proposition
\ref{delPezzo}, and hence $[\cO_P] = [\cO_{P'}]$ in $K_0(
\overline{S} )^{(2)}$. So $K_0( \overline{S} )^{(2)}$ is generated
by the $\Gamma$-invariant element $[\cO_P]$, and thus is a trivial
$\Gamma$-module. Similarly, our arguments above show that the set
\{$[\cO_{l_0} \oplus \cO_{m_1}]$, $[\cO_{l_0} \oplus \cO_{m_2}]$,
$[\cO_{l_1} \oplus \cO_{m_2}]$, $[\cO_{m_1} \oplus \cO_{l_0}
\oplus \cO_{m_2}]$, $[\cO_{l_1} \oplus \cO_{m_0} \oplus
\cO_{l_2}]$\} is a $\Gamma$-invariant basis of
$K_0(\overline{S})^{(1)}$, and thus $K_0(\overline{S})^{(1)}$ is a
permutation module. It follows that
$H^1(F,K_0(\overline{S})^{(2)}) = H^1(F,K_0(\overline{S})^{(1)}) =
0.$
\end{remark}

We will need the following proposition (cf. Proposition 6.1 of
\cite{MerPan97}).
\begin{proposition}
If $Y$ is a variety such that the homomorphism $\phi_{F(y)}: K_0
(A_{F(y)}) \ra K_0 (S_{F(y)})$ is an isomorphism for every $y \in
Y$, then $\phi_Y: K_0 (Y ; A) \ra K_0 (S \times Y)$ is surjective.
\label{surjection}
\end{proposition}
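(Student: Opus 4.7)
The plan is Noetherian induction on closed subschemes of $Y$, combined with the localization exact sequence in Quillen $K'$-theory of coherent sheaves. Because $\phi_Y$ lands in $K_0$ of locally free sheaves while localization is most natural for coherent sheaves, I would first pass from $K_0$ to $K'_0$ via the forgetful map $K_0(S\times Y)\to K'_0(S\times Y)$; since $S$ is smooth over $F$, this map is an isomorphism whenever $Y$ is regular, and the non-regular case will be absorbed by the induction.

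Assume inductively that the conclusion of the proposition holds after base change to every proper closed subscheme $W\subsetneq Y$; the hypothesis that $\phi_{F(y)}$ is an isomorphism for every $y\in Y$ is inherited by $W$, since every point of $W$ is a point of $Y$. Given $\alpha\in K'_0(S\times Y)$, restrict $\alpha$ to each generic point $\eta_i$ of an irreducible component of $Y$; by hypothesis there exists $\beta_i\in K_0(A_{F(\eta_i)})$ with $\phi_{F(\eta_i)}(\beta_i)=\alpha|_{S_{F(\eta_i)}}$. Since $A$ is a finite separable $F$-algebra and projective $A\tens_F \cO_U$-modules are finitely presented, each $\beta_i$ spreads out to a class $\tilde\beta_i\in K_0(U_i;A)$ on an affine open $U_i\ni \eta_i$ chosen disjoint from the other components; after further shrinking we may arrange $\phi_{U_i}(\tilde\beta_i)=\alpha|_{S\times U_i}$ in $K'_0(S\times U_i)$.

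Set $U=\bigsqcup U_i$ and $\tilde\beta=\sum \tilde\beta_i\in K_0(U;A)$, so that $\phi_U(\tilde\beta)=\alpha|_{S\times U}$ in $K'_0(S\times U)$. The localization exact sequence
\[
K'_0\bigl(S\times(Y\setminus U)\bigr)\longrightarrow K'_0(S\times Y)\longrightarrow K'_0(S\times U)\longrightarrow 0
\]
then shows that, after subtracting $\phi_Y$ applied to any locally free extension of $\tilde\beta$ from $U$ to $Y$ (constructed by extending the underlying coherent $A\tens_F \cO_U$-module across $Y$ and then resolving by locally free $A\tens_F\cO_Y$-modules), the residual class comes from $K'_0(S\times W)\to K'_0(S\times Y)$ with $W=Y\setminus U$ a proper closed subscheme. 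The inductive hypothesis applied to $W$ then produces a preimage in $K_0(W;A)$ of the residual class; adding back the extension of $\tilde\beta$ gives a preimage of $\alpha$ in $K_0(Y;A)$, completing the induction.

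The main obstacle is the combined spreading-out and extension step, especially the lift of $\tilde\beta\in K_0(U;A)$ and of the preimage supplied by the inductive hypothesis on $W$ to locally free classes in $K_0(Y;A)$, with controllable error in the image of $K'_0(S\times W)$. Both tasks are handled by standard Noetherian finite-presentation arguments: projective $A\tens_F\cO_{U_i}$-modules lift from generic fibres and glue across covers, and coherent extensions of $A\tens_F\cO_U$-modules to $Y$ can be resolved by locally free ones because $A$ is flat over $F$. Keeping the compatibility with the functor $\phi$ and with the localization sequence clear throughout is the essential technical content; this is precisely the maneuver used in Proposition 6.1 of \cite{MerPan97}.
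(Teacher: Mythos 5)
Your overall strategy is essentially the paper's own proof: an induction using the localization exact sequence, spreading out a preimage of $x$ from the generic point(s) of $Y$ to an open $U$ and pushing the error into a smaller closed subset $Z$, to which the inductive hypothesis applies. The paper organizes the induction slightly differently (a double induction on $\dim Y$ and the number of irreducible components, together with a reduction to $Y_{\red}$), whereas you handle all generic points at once over a disjoint union of affine opens; both work, and neither is a genuinely different route.

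One step of yours is over-engineered and, as written, would not go through. You try to produce a lift of $\tilde\beta \in K_0(U;A)$ to $K_0(Y;A)$ by first extending the underlying coherent $A\tens_F\cO_U$-module across $Y$ and then \emph{resolving by locally free modules}. On a singular $Y$ a coherent sheaf need not admit a finite locally free resolution, so this construction does not in general yield a well-defined class in $K_0(Y;A)$. It is also unnecessary: the surjectivity of the restriction map $K_0(Y;A)\to K_0(U;A)$ is precisely the right-exactness of the localization sequence you have already invoked, so you can pick \emph{any} preimage $\hat\beta\in K_0(Y;A)$ of $\tilde\beta$ and then argue that $x-\phi_Y(\hat\beta)$ dies in $K_0(S\times U)$ and hence comes from $K_0(S\times Z)$; this is exactly the diagram chase the paper does. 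Finally, your remark that the passage from $K_0$ to $K'_0$ will be ``absorbed by the induction'' on singular $Y$ is asserted but not justified, since the comparison map $K_0(S\times Y)\to K'_0(S\times Y)$ need not be an isomorphism for $Y$ singular. The paper is equally terse about the applicability of the localization sequence in this twisted setting, citing \cite{Qui72} and \cite{MerPan97} without comment, so this is an issue inherited from the original argument rather than one you introduced; but your version of the step does not actually fix it.
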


\begin{proof}
We do this by double induction on the dimension of $Y$ and the
number of irreducible components of $Y$.

If $Y$ has a proper irreducible component $Y'$, with complement
$U$, we have the following localization exact sequence (cf.
\cite{Qui72}):
\[
\xymatrix{K_0(Y'; A) \ar[r] \ar[d]^{\phi_{Y'}} & K_0(Y; A)
\ar[r]\ar[d]^{\phi_Y}
& K_0(U; A) \ar[r] \ar[d]^{\phi_U} & 0 \\
K_0(Y' \times S) \ar[r] & K_0(Y \times S) \ar[r] & K_0(U \times S)
\ar[r]  & 0 }
\]
By our inductive assumption, the vertical maps on the right and on
the left are surjective. This implies that the middle vertical map
is surjective as well. So we may assume that $Y$ is irreducible.
If $Y$ is not reduced, then we the natural $Y_{\red} \ra Y$
induces the commutative diagram
\[ \xymatrix{
K_0(Y_{\red}; A)  \ar[r] \ar[d]^{\phi_{Y_{\red}}} & K_0(Y; A) \ar[d]^{\phi_Y} \\
K_0(Y_{\red} \times S) \ar[r]   & K_0(Y \times S), }\] where the
horizontal arrows are isomorphisms. Thus we may also assume that
$Y$ is reduced.

Now let $x \in K_0 (Y \times S)$. By assumption, $\phi_{F(Y)}:
K_0(A_{F(Y)}) \ra K_0 (S_{F(Y)})$ is an isomorphism, and hence
there exists an open set $U$ in $Y$ such the image of $x$ in $K_0
(U \times S)$ is in the image of $\phi_U$. We again consider the
localization exact sequence:
\[
\xymatrix{K_0(Z; A) \ar[r] \ar[d]^{\phi_Z} & K_0(Y; A)
\ar[r]\ar[d]^{\phi_Y}
& K_0(U; A)  \ar[r] \ar[d]^{\phi_U} & 0 \\
K_0(Z \times S) \ar[r] & K_0(Y \times S) \ar[r] & K_0(U \times S)
\ar[r]  & 0, }
\]
where $Z$ is the complement of $U$ in $Y$. By assumption, $Z$ has a
strictly smaller dimension than $Y$, and so by our inductive
hypothesis, $\phi_Z$ is surjective. A standard diagram chase shows
that $x \in \Im(\phi_Y)$.
\end{proof}

\begin{proof}[Proof of Theorem \ref{zero}]
Let $SB(B)$ be the Severi-Brauer $K$-variety associated to $B$,
$SB(Q)$ be the Severi-Brauer $L$-variety associated to $Q$, and $Y =
R_{K/F} (SB(B)) \times R_{L/F} (SB(Q))$ be the product of the
restriction of scalars of both varieties. Then for any field
extension $E$ of $F$, $Y(E)$ is nonempty if and only if $SB(B)(K
\tens_F E)$ and $SB(Q)(L \tens_F E)$ are nonempty if and only if
$B_E = B \tens_K (K \tens_F E)$ and $Q_E = Q \tens_L (L \tens_F E)$
are split.

The projection $p: Y \ra \Spec (F)$ induce the following diagram:
\[
\xymatrix{ K_0(A)     \ar[r]^\phi \ar@<1ex>[d]^{p^*} & K_0(S) \ar@<1ex>[d]^{(p_Y)^*} \\
           K_0(Y ; A) \ar[r]^{\phi_Y} \ar[u]^{p_*}   & K_0(Y \times S), \ar[u]^{(p_Y)_*}}
\]
where $p_Y: Y \times S \ra S$ is the projection induced by $p$.
Both squares commute, and $p_*p^*$ is the identity homomorphism,
as $Y$ is a geometrically rational variety. For every $y \in Y$,
$Y(F(y)) \neq \emptyset$, so $B_{F(y)}$ and $Q_{F(y)}$ are split,
and thus $\phi_{F(y)}: K_0(A_{F(y)}) \ra K_0 (S_{F(y)})$ is an
isomorphism by Proposition \ref{splitThm}. So by Proposition
\ref{surjection}, $\phi_Y: K_0 (Y;A) \ra K_0 (Y \times S)$ is
surjective. A diagram chase shows the top horizontal map $\phi$ is
also surjective.

Now let $E$ be any field extension of $F$ such that $S(E) \neq
\emptyset$. Then, $B_E$ and $Q_E$ are split, and so $\phi_{E}: K_0
(A_E) \ra K_0 (S_E)$ is an isomorphism, again by Proposition
\ref{splitThm}. The homomorphisms $\phi$ and $\phi_{E}$ fit into
the following commutative diagram:
\[
\xymatrix{ K_0(A)     \ar[r]^\phi \ar[d] & K_0(S) \ar[d] \\
           K_0(A_E) \ar[r]^{\phi_E}&  K_0(S_E),}
\]
where the vertical homomorphisms are induced by the inclusion $F
\subset E$. The bottom horizontal map is an isomorphism, and the left
vertical map is injective. It follows that $\phi$ is injective, and
hence an isomorphism.
\end{proof}

As $\phi$ is an isomorphism, the hypothesis on the variety $V$ in
Proposition \ref{surjection} is always true, we obtain the
following corollary.

\begin{corollary}
$\phi_V: K_0(V;A) \ra K_0 (V \times S)$ is surjective for any
  $F$-variety $V$. \label{surjection2} \hfill $\Box$
\end{corollary}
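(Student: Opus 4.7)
The plan is to observe that this corollary is an immediate consequence of combining Theorem \ref{zero} with Proposition \ref{surjection}: once we know $\phi$ is an isomorphism over \emph{every} base field, the hypothesis of Proposition \ref{surjection} holds vacuously for every $F$-variety $V$, and the conclusion of that proposition is exactly the surjectivity we want.

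In more detail, I would first observe that Theorem \ref{zero} was stated and proved with $F$ an arbitrary field; its hypotheses are preserved under base change, since for any field extension $E/F$ the $E$-variety $S_E$ is again a del Pezzo surface of degree $6$, determined by the triple $(B_E, Q_E, (KL)_E)$, and the locally free sheaves $\cI, \cJ$ pull back to the corresponding sheaves on $S_E$. In particular, for any point $y \in V$, applying Theorem \ref{zero} over the residue field $F(y)$ gives that
\begin{displaymath}
\phi_{F(y)}: K_0(A_{F(y)}) \llg K_0(S_{F(y)})
\end{displaymath}
is an isomorphism. This verifies the hypothesis of Proposition \ref{surjection} for the variety $V$.

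Having done so, I would conclude by simply invoking Proposition \ref{surjection}: it asserts that whenever $\phi_{F(y)}$ is an isomorphism at every $y \in V$, the map $\phi_V: K_0(V;A) \ra K_0(V \times S)$ is surjective. This yields the corollary. There is no real obstacle here; the only point to be careful about is that Theorem \ref{zero} is genuinely available over arbitrary base fields, which follows from its proof by replacing $F$ with $F(y)$ throughout (and noting that the constructions of $B$, $Q$, $\cI$, $\cJ$, and the functor inducing $\phi$ are all compatible with extension of scalars).
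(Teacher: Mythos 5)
Your proposal is correct and matches the paper's argument exactly: the paper likewise observes that Theorem \ref{zero} (applied over each residue field $F(y)$) verifies the hypothesis of Proposition \ref{surjection}, which then gives surjectivity of $\phi_V$. The only quibble is the word ``vacuously''---the hypothesis is not vacuous but is always satisfied---but the mathematics is the same.
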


\section{Higher K-theory}

We have shown that the $K_0$ groups of $S$ and $A$ coincide. We
will show that this is also true for the higher Quillen
$K$-groups.

For any $F$-varieties $X$, $Y$, and $Z$, and separable
$F$-algebras $A$, $B$, and $C$, consider the functor:
\[\mathbf{P}(Y \times Z; B\op \tens_F C) \times \mathbf{P}(X
\times Y; A\op \tens_F B) \ra \mathbf{P}(X \times Z; A\op \tens_F
C),\] sending a pair $(M, N)$ to $(p_{13})_*(p_{23}^*(M) \tens_B
p_{12}^*(N))$, where $p_{12}$, $p_{23}$, and $p_{13}$ are the
projections of $X \times Y \times Z$ onto its factors. This
functor is bi-exact, and thus induces a product map
\[K_n (Y \times Z; B\op \tens_F C) \tens_{\Z} K_m(X \times Y;
A\op \tens_F B) \ra K_{n+m}(X \times Z; A\op \tens_F C).\] We will
denote the image of $u \tens x$ under this map by $u \bullet_B x$.

We recall the $K$-Motivic Category $\cC$ and some of its
properties. The details can be found in \cite{MerPan97} and
\cite{Pan94}. Objects of $\cC$ are pairs $(X, A)$, where $X$ is an
$F$-variety and $A$ is a separable $F$-algebra.  For two pairs
$(X, A)$ and $(Y, B)$ in $\cC$, we set $\Mor_{\cC}((X,A),(Y,B)) :=
K_0(X \times Y; A\op \tens_F B)$. The composition law is $g \circ
f = g \bullet_B f$, for $f: (X, A) \ra (Y, B)$ and $g: (Y, B) \ra
(Z, C)$ in $\cC$.  For any pair $(X,A)$ with $X$ smooth, the
identity element $1_{(X,A)} \in K_0 (X \times X; A\op \tens_F A)$
is the element $[\cO_\Delta \tens_F A]$, where $\Delta \subset X
\times X$ is the diagonal. For any $F$-variety $X$ and any
separable $F$-algebra $A$, we will write $X$ for the pair $(X,F)$
and $A$ for the pair $(\Spec F, A)$. Finally, for any $F$-variety
$V$ and any nonnegative integer $n$, we have a realization functor
$K_n^V$, which sends an object $(X,A)$ to $K_n(V \times X;A)$, and
$K_n^V(f)(x) = f \bullet_A x \in K_n (V \times Y; B)$ for any
morphism $f \in \Mor_{\cC}((X, A), (Y, B)) = K_0 (X \times Y; A\op
\tens_F B)$ and $x \in K_n(V \times X; A)$. We will denote
$K_n^{\Spec F}$ by $K_n$.

As we mentioned in the beginning of Section \ref{secK_0}, there is
a left action of $A\op = \End_{\cO_S}(\cO_S) \times
\End_{\cO_S}(\cI) \times \End_{\cO_S}(\cJ)$ on the locally free
sheaf $\cP = \cO_S \oplus \cI \oplus \cJ$. So $\cP \in \mathbf{P}
(X; A\op)$. The corresponding element $[\cP] \in K_0 (S; A\op)$
defines a morphism $u: A \ra S$ in $\cC$. It follows from the
construction of the realization functor that $\phi_V = K_0^V(u)$
for any $V$. In particular, $K_0 (u) = \phi$.

\begin{theorem}
$u:A \ra S$ is an isomorphism in $\cC$.
\end{theorem}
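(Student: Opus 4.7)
The plan is to construct a two-sided inverse $v : S \to A$ to $u$ in $\cC$. By Corollary~\ref{surjection2} applied with $V = S$, the realization $\phi_S = K_0^S(u) : K_0(S; A) \to K_0(S \times S)$ is surjective, so we lift the identity morphism $[\cO_\Delta] = 1_S \in K_0(S \times S)$ to some $v \in K_0(S; A) = \Mor_\cC(S, A)$. Since $\phi_S(v)$ equals $u \circ v$ by the construction of the bullet product and the realization functor, this gives $u \circ v = 1_S$ in $\End_\cC(S)$.

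To show $v \circ u = 1_A$, set $e := v \circ u \in \End_\cC(A) = K_0(A \tens_F A\op)$. First, $e$ is idempotent, since $e^2 = v \circ (u \circ v) \circ u = v \circ 1_S \circ u = e$. Applying $K_0$ and using that $\phi = K_0(u)$ is an isomorphism by Theorem~\ref{zero}, the relation $\phi \circ K_0(v) = K_0(u \circ v) = \id_{K_0(S)}$ forces $K_0(v) = \phi^{-1}$, whence $K_0(e) = \id_{K_0(A)}$. The same argument carried out over any field extension $E/F$ --- noting that $S_E$ is still a del Pezzo surface of degree 6 associated to the triple $(B_E, Q_E, (KL)_E)$, so Theorem~\ref{zero} applies to $u_E$ --- shows $K_0^E(e) = \id_{K_0(A_E)}$ for every $E/F$.

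To conclude $e = 1_A$, take $E$ to be a Galois splitting field of the separable $F$-algebra $A$, so that $A_E \cong \prod_i M_{n_i}(E)$. A direct Morita-theoretic computation identifies the natural map $K_0(A_E \tens_E A_E\op) \to \End(K_0(A_E))$ as an isomorphism of free abelian groups of rank $k^2$ (where $k$ is the number of simple components of $A_E$): the basis element $[M_{n_i}(E) \tens_E M_{n_j}(E)\op]$ corresponds to the elementary matrix sending $[S_\ell]$ to $\delta_{j\ell}[S_i]$. In particular, the idempotent $e_E \in K_0(A_E \tens_E A_E\op)$ whose $K_0$-realization is the identity must equal $1_{A_E}$.

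The main obstacle is descending this equality from $E$ back to $F$. This is handled by Galois descent: for $E/F$ Galois, the natural map $K_0(A \tens_F A\op) \to K_0(A_E \tens_E A_E\op)^{\Gal(E/F)}$ is an isomorphism, because $K_0$ of a separable $F$-algebra decomposes as a permutation $\Gal(E/F)$-module on the simple components of its base change to $E$. Hence $e_E = 1_{A_E}$ descends to $e = 1_A$, showing $v \circ u = 1_A$ and completing the proof that $u$ is an isomorphism in $\cC$.
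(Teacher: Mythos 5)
Your proof is correct and follows essentially the same route as the paper's: lift $1_S$ through the surjection $\phi_S = K_0^S(u)$ to get a right inverse $v$, and then check $v\circ u = 1_A$ by passing to a splitting field, where $\Mor_\cC(A_E,A_E)$ is identified with a matrix ring over $\Z$ and the realization functor detects the identity. Two small remarks. First, the observation that $e = v\circ u$ is idempotent is never used; once you know the realization $K_0(A_E\tens_E A_E\op)\to\End(K_0(A_E))$ is injective and $K_0(e_E)=\id$, you conclude $e_E = 1_{A_E}$ directly. Second, and more substantively, the descent claim is over-stated: it is \emph{not} generally true that $K_0(A\tens_F A\op)\to K_0(A_E\tens_E A_E\op)^{\Gal(E/F)}$ is an isomorphism for a separable $F$-algebra. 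For a simple factor $D$ of $A\tens_F A\op$ with simple module $S$, the class $[S]$ maps to $\ind(D)$ times the corresponding Galois-orbit sum, so the map has cokernel $\prod_D \Z/\ind(D)\Z$; since, e.g., $F\tens_F B\op$ can be a nonsplit factor, the index need not be $1$. Fortunately you only need \emph{injectivity} of $K_0(A\tens_F A\op)\to K_0(A_E\tens_E A_E\op)$, which does hold (distinct simple modules base-change to sums of disjoint families of simples), and this is exactly the injection $K_0(A\op\tens_F A)\hookrightarrow K_0((A\op\tens_F A)_{\overline F})$ that the paper invokes. With that adjustment the argument is sound.
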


\begin{proof}
Let $V$ be any $F$-variety. Equating $K_0 (V; A)$ (resp. $K_0 (V
\times S)$) with $\Mor_{\cC} (V, A)$ (resp. $\Mor_{\cC}(V, S)$),
$K_0^V(u):K_0 (V; A) \ra K_0 (V \times S)$ is just
post-composition in $\cC$ with $u$. By Corollary
\ref{surjection2}, $K_0^V(u) = \phi_V$ is surjective for any
variety $V$. In particular, if $V = S$, there is an element $v \in
K_0 (S;A)$ such that $uv = [\cO_\Delta]$, i.e. $u$ has a right
inverse $v$ in $\cC$.

We want to show that $v$ is also a left inverse to $u$ in $\cC$,
i.e. $vu = [A] \in K_0(A\op \tens_F A)$. As $K_0(A\op \tens_F A)
\hookrightarrow K_0((A\op \tens_F A)_{\overline{F}}) = K_0 (
A_{\overline{F}}\op \tens_{\overline{F}} A_{\overline{F}})$, it
suffices to consider the case where $K$, $L$, $B$ and $Q$ are
split. So $A = F \times M_3(F \times F) \times M_2(F \times F
\times F)$, and thus $A$ is isomorphic in $\cC$ to $F \times (F
\times F) \times (F \times F \times F)$ (cf. example 1.6 of
\cite{MerPan97}).  So $K_0(A) \cong \Z^6$, and $\Mor_{\cC}(A,A) =
K_0(A\op \tens_F A) \cong M_6(\Z)$. Moreover, under this
isomorphism $[A] \in K_0(A\op \tens_F A)$ corresponds to the
identity matrix.

So $vu \in K_0 (A\op \tens_F A)$ is represented by a matrix $M$
with integer entries. It follows that the corresponding
homomorphism $K_0(vu)$ from $K_0(A) \cong \Z^6$ to itself is
multiplication by this matrix $M$. Now, as $v$ is a right inverse
to $u$ in $\cC$, $K_0(v)$ is a right inverse to $K_0(u)$. However,
$K_0(u) = \phi$ is an isomorphism by Theorem \ref{zero}, so in
fact $K_0(v) = K_0(u)^{-1}$. Thus $K_0(vu) = K_0(v)K_0(u) =
\id_{K_0(A)}$, which forces $M$ to be the identity matrix. Thus
$vu = [A] \in K_0 (A\op \tens A)$, i.e. $vu = \id_A$ in $\cC$.
\end{proof}

\begin{corollary}
For any integer $n$, any central simple $F$-algebra $D$, and any
$F$-variety $V$,
\[K_n(V; A \tens_F D) \cong K_n(V \times S; D).\] In
particular, $ K_n(F) \oplus K_n(B) \oplus K_n(Q) = K_n(A) \cong K_n(S)$.
\end{corollary}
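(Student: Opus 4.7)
The plan is to deduce the corollary formally from the preceding theorem, which establishes that $u : A \to S$ is an isomorphism in the $K$-motivic category $\cC$. The category $\cC$ carries a natural ``tensor with $D$'' operation sending $(X, B)$ to $(X, B \tens_F D)$; at the level of morphisms, given $f \in \Mor_\cC((X, B), (Y, B')) = K_0(X \times Y; B\op \tens_F B')$, one obtains $f \tens [D] \in K_0(X \times Y; (B \tens_F D)\op \tens_F (B' \tens_F D))$ by external tensor product with the identity class of the object $(\Spec F, D)$. This operation respects the $\bullet$-composition defined at the beginning of Section \ref{secK_0}, so it sends isomorphisms in $\cC$ to isomorphisms.

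Applying this to the isomorphism $u: A \to S$ in $\cC$ yields an isomorphism $u_D: (\Spec F, A \tens_F D) \to (S, D)$ in $\cC$. Then applying the realization functor $K_n^V: \cC \to \abelian$, which by construction sends $(X, B)$ to $K_n(V \times X; B)$, converts $u_D$ into an isomorphism
\[K_n(V; A \tens_F D) \xra{\sim} K_n(V \times S; D),\]
which is precisely the first assertion.

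For the ``in particular'' clause, take $V = \Spec F$ and $D = F$ to obtain $K_n(A) \cong K_n(S)$. The decomposition $K_n(A) = K_n(F) \oplus K_n(B) \oplus K_n(Q)$ then follows from the product decomposition $A = F \times B \times Q$, since $\mathbf{P}(A)$ splits as $\mathbf{P}(F) \times \mathbf{P}(B) \times \mathbf{P}(Q)$ and Quillen's $K$-functor converts such a product of exact categories into a direct sum of $K$-groups.

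Since the preceding theorem has already done the essential work, there is no substantive obstacle remaining. The only point requiring care is verifying that tensoring with $D$ is a well-defined endofunctor of $\cC$ preserving isomorphisms; this is a routine manipulation of the $\bullet$-product, analogous to the Morita identifications invoked in the proof of the previous theorem (e.g. the identification $A \cong F \times (F \times F) \times (F \times F \times F)$ in $\cC$ over a separable closure).
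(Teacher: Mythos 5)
Your proof is correct and takes essentially the same approach as the paper: both obtain the isomorphism $A \tens_F D \to (S,D)$ in $\cC$ from $u$ by tensoring with $D$ (the paper phrases this step as the Morita identification $K_0(S;A\op \tens_F D\op \tens_F D) \cong K_0(S;A\op)$, while you phrase it as external product with $[D] = 1_{(\Spec F, D)}$, but these are the same mechanism), and then apply the realization functor $K_n^V$.
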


\begin{proof}
For any central simple $F$-algebra $D$, Morita Equivalence gives a
natural isomorphism $K_0(S; A\op \tens_F D\op \tens_F D) =
K_0(S;A\op)$. Thus the isomorphism $u:A \ra S$ in $\cC$ also
defines an isomorphism from $A \tens_F D$ to $(S,D)$ in $\cC$.
Applying the realization functor $K_n^{V}$ yields $K_n (V; A
\tens_F D) \cong K_n (V \times S; D).$
\end{proof}

We conclude the paper with an Index Reduction Formula for the
function field of the surface $S(B,Q, KL)$. We will need the
following lemma:

\begin{lemma}[\cite{SchVBer92}] Let $X$ be a irreducible $F$-variety,
and $D$ a central simple $F$-algebra. The restriction homomorphism
$K_0(X ;D) \ra K_0(D_{F(X)})$ induced by the inclusion
$\Spec(F(X)) \ra X$ is surjective.
\end{lemma}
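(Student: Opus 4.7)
The plan is to combine a spreading-out argument with Quillen's localization sequence. Let $\eta = \Spec(F(X))$ denote the generic point of $X$, and let $[P] \in K_0(D_{F(X)})$ be represented by a finitely generated projective left $D_{F(X)}$-module $P$. Writing $P = e \cdot D_{F(X)}^n$ for some idempotent $e \in M_n(D_{F(X)})\op$, one sees that $P$ is controlled by finitely many elements of $F(X)$ satisfying a single equation $e^2 = e$.

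Since $F(X) = \varinjlim_U \cO(U)$ as $U$ ranges over nonempty affine open subsets of $X$, after clearing denominators the entries of $e$ live in some $\cO(U)$, and (shrinking $U$ if necessary) one may assume the relation $e^2 = e$ already holds there, producing an idempotent $e_U \in M_n(D \tens_F \cO(U))\op$. The summand $e_U \cdot (D \tens_F \cO(U))^n$ is a direct summand of a free $(D \tens_F \cO(U))$-module, so the associated sheaf $\mathcal{M}$ on $U$ lies in $\mathbf{P}(U; D)$ and has generic fiber isomorphic to $P$.

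Next, I apply Quillen's localization sequence to the complementary pair $Z := X \setminus U$ and $U$:
\[ K_0(Z; D) \ra K_0(X; D) \ra K_0(U; D) \ra 0. \]
This yields surjectivity of the restriction $K_0(X; D) \ra K_0(U; D)$, so there exists $y \in K_0(X; D)$ whose image in $K_0(U; D)$ equals $[\mathcal{M}]$. Composing with the further restriction $K_0(U; D) \ra K_0(D_{F(X)})$ to the generic point sends $[\mathcal{M}]$ to $[\mathcal{M}_\eta] = [P]$, and by functoriality this is also the image of $y$. Hence $y$ is the desired preimage.

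The main obstacle is the surjectivity in the localization sequence for the category $\mathbf{P}(X; D)$ of sheaves that are locally free as $\cO_X$-modules, rather than the larger category of all coherent $D \tens_F \cO_X$-modules. When $X$ is regular (which is the only case actually needed for the index reduction application, where $X = S$ is the smooth del Pezzo surface), Quillen's resolution theorem identifies the $K$-theory of $\mathbf{P}(X; D)$ with that of coherent modules, and the classical localization sequence applies directly. For general irreducible $X$, one invokes the noncommutative refinement established in \cite{SchVBer92}, exploiting that $D \tens_F \cO_X$ is Azumaya and thus locally Morita equivalent to $\cO_X$.
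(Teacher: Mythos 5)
The paper does not actually prove this lemma; it is quoted from \cite{SchVBer92} without argument, so there is no in-text proof to measure your sketch against. Your proof is the natural spreading-out plus localization argument, and it is correct in the case the paper needs. The first step is fine: since $D$ is finite-dimensional over $F$, the finitely many $F(X)$-coefficients of the idempotent $e$ (with respect to an $F$-basis of $D$) all lie in $\cO(U)$ for some nonempty affine open $U \subset X$, using $F(X) = \varinjlim_U \cO(U)$; shrinking $U$ further makes $e^2 = e$ hold over $U$ and produces a module in $\mathbf{P}(U;D)$ with generic fiber $P$. The second step is where you correctly flag the delicate point: the exact sequence $K_0(Z;D) \ra K_0(X;D) \ra K_0(U;D) \ra 0$ is Quillen's localization theorem for the abelian categories of coherent $D\tens_F\cO_X$-modules (i.e.\ $G$-theory) and transfers to the exact categories $\mathbf{P}(-;D)$ only once Quillen's resolution theorem identifies the two, which requires $X$ regular. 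You rightly note that this covers the only instance the paper uses, namely $X = S$ smooth.

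Where the sketch is weaker is in the claim that for general singular $X$ the gap can be closed by ``exploiting that $D\tens_F\cO_X$ is Azumaya and thus locally Morita equivalent to $\cO_X$.'' Morita equivalence only reduces the question to the commutative case $D = F$; but surjectivity of $K_0(X)\ra K_0(U)$ for singular $X$ is exactly what is in doubt (its failure is controlled by the $K$-theory of the singular locus, e.g.\ via the Bass--Thomason--Trobaugh sequence with negative $K$-groups), and Azumaya-ness of $D\tens_F\cO_X$ does nothing to repair it. So the general statement, as quoted with $X$ merely an irreducible $F$-variety, genuinely rests on whatever Schofield and Van den Bergh prove, and your sketch is not an independent proof in that generality. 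This does not affect the paper, whose sole application is to $X = S$ regular, and your argument is sound there.
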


\begin{lemma}
Let $X$ be an irreducible $F$-variety, and $D$ a central simple
$F$-algebra.
\[\ind D_{F(X)} = \frac{1}{\deg D}\GCD \{ \rank (P), \forall P \in
\cP (X;D)\}. \] \label{rank}
\end{lemma}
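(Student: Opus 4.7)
The plan is to combine the preceding Schofield--Van den Bergh lemma with a direct dimension count. Set $n = \deg D$, $d = \ind D_{F(X)}$, and $m = nd$; the target is to establish
\[
\GCD\{\rank P : P \in \cP(X;D)\} = m,
\]
since then dividing by $\deg D = n$ gives the desired formula $\ind D_{F(X)} = \GCD/\deg D$.

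First I would verify that $m$ divides $\rank P$ for every $P \in \cP(X;D)$. Because $P$ is locally free of finite rank on the irreducible variety $X$, its $\cO_X$-rank equals $\dim_{F(X)} P_{F(X)}$, and $P_{F(X)}$ is a finitely generated left $D_{F(X)}$-module. Writing $D_{F(X)} \cong M_{n/d}(D_0)$ with $D_0$ a central division $F(X)$-algebra of degree $d$, the unique simple left $D_{F(X)}$-module has $F(X)$-dimension $(n/d) \cdot d^2 = nd = m$. Hence every finitely generated left $D_{F(X)}$-module has $F(X)$-dimension in $m\Z$, so $m \mid \rank P$ for every $P$ and therefore $m$ divides the GCD.

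For the reverse divisibility I would invoke the preceding lemma: the restriction homomorphism $K_0(X;D) \to K_0(D_{F(X)})$ is surjective. Identify $K_0(D_{F(X)}) \cong \Z$ by composition length, so that the rank homomorphism on $K_0(X;D)$ is $m$ times the composition $K_0(X;D) \to K_0(D_{F(X)}) \cong \Z$. Pick any $x \in K_0(X;D)$ lifting the generator of $\Z$ and write $x = [P] - [Q]$ with $P, Q \in \cP(X;D)$. Then $\rank P - \rank Q = m$, which forces the GCD to divide $m$. Combining both divisibilities gives $\GCD = m$.

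The only subtle point is that the GCD is taken over honest locally free sheaves rather than virtual $K_0$-classes; this is precisely what the Schofield--Van den Bergh surjectivity addresses, by letting us realize the value $m$ as $\rank P - \rank Q$ for actual objects of $\cP(X;D)$. Beyond that, the argument is a routine dimension count using the structure of simple modules over a central simple algebra.
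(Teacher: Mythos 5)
Your proof is correct and follows essentially the same route as the paper's: both decompose the rank homomorphism through $K_0(D_{F(X)})$, both use the Schofield--Van den Bergh surjectivity to identify the image of the rank map, and both use the formula $\dim_{F(X)}(M) = \deg(D_{F(X)})\ind(D_{F(X)})$ for a simple module $M$. The only cosmetic difference is that you split the identity into two divisibility statements where the paper computes the image subgroup of $\Z$ directly; the ingredients and logic are the same.
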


\begin{proof}
We recall that for any field $E$ and any central simple
$E$-algebra $D'$, $K_0(D')$ is cyclic, generated by the class of a
simple $D'$-module $M'$. Moreover, $\dim_E (M') = \deg (D')
\ind(D')$.

The rank homomorphism $\rank: K_0(X;D) \ra K_0(F(X))$ has the
following decomposition:
\[K_0(X;D) \ra K_0(D_{F(X)}) \ra K_0(F(X)),\]
where the first map is induced by the inclusion $\Spec (F(X)) \ra
X$, and the second map takes the class of a $D_{F(X)}$-module to
the class of the corresponding $F(X)$-vector space.

As $K_0(F(X))$ is cyclic, the image of the rank homomorphism is
$n[F(X)]$, where $n$ is the greatest common divisor of the numbers
$\rank (P)$, for all $P \in \cP (X;D)$. By the previous lemma, the
homomorphism $K_0(X;D) \ra K_0(D_{F(X)})$ is surjective. Thus if
$M$ is a simple $D_{F(X)}$-module, \begin{displaymath}n =
\dim_{F(X)}(M) = \deg (D_{F(X)}) \ind(D_{F(X)}) = \deg (D)
\ind(D_{F(X)}),\end{displaymath} and the result follows.
\end{proof}

\begin{corollary}[Index Reduction Formula]
Let $S = S(B, Q, KL)$ be a del Pezzo surface of degree 6. For any
central simple $F$-algebra $D$, $\ind D_{F(S)}$ is equal to:
\begin{enumerate}

\item[i.] $\GCD\{\ind(D), 2\ind(D \tens_F B), 3\ind (D \tens_F Q )
  \}$, if $K$ and $L$ are fields.

\item[ii.]$\GCD\{\ind(D), \ind(D \tens_F B_1), \ind(D \tens_F
B_2)\}$, if $K = F \times F$ and $L$ is a field. Here
  $B = B_1 \times B_2$.

\item[iii.] $\GCD\{\ind(D), \ind (D \tens_F Q_1), 2\ind (D \tens_F
Q_2)\}$, if $K$ is a field, and $L = F \times E$. Here $Q = Q_1
\times Q_2$.

\item[iv.] $\GCD\{\ind(D), \ind (D \tens_F Q_1),
  \ind (D \tens_F Q_2), \ind (D \tens_F Q_3)\}$, if $K$ is a field,
  and $L = F \times F \times F$. Here $Q = Q_1 \times Q_2 \times Q_3$.

\item[v.] $\ind{D}$, when $K$ and $L$ are not fields.
\end{enumerate}
\end{corollary}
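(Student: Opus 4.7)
The plan is to combine Lemma \ref{rank} applied to $X = S$ with the $K_0$ decomposition from Theorem \ref{zero} to reduce the index computation to a rank calculation on each of three summands. Lemma \ref{rank} gives $\ind D_{F(S)} = \frac{1}{\deg D} \GCD\{\rank_{\cO_S}(P) : P \in \cP(S; D)\}$, so the task reduces to computing the image of the rank homomorphism $K_0(S; D) \to \Z$.

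By the corollary of Theorem \ref{zero} applied with $V = \Spec F$, there is an isomorphism $K_0(S; D) \cong K_0(A \tens_F D) = K_0(D) \oplus K_0(B \tens_F D) \oplus K_0(Q \tens_F D)$, sending $(M_1, M_2, M_3)$ to $[\cO_S \tens_F M_1 \oplus \cI \tens_B M_2 \oplus \cJ \tens_Q M_3]$. Since rank is additive on direct sums, the image of the rank homomorphism will be generated by the minimum ranks contributed by each summand. A simple $D$-module contributes $\deg D \cdot \ind D$. For $\cI \tens_B V$ with $V$ a simple right $B \tens_F D$-module: since $\cI$ has $\cO_S$-rank $6$ (as a direct sum of six line bundles over $\overline{F}$) and $B \tens_F D$ is central simple of degree $3 \deg D$ over $K$, one has $B \tens_F D \cong V^{\oplus r}$ with $r = 3\deg D / \ind(B \tens_F D)$; comparing with $\cI \tens_B (B \tens_F D) = \cI \tens_F D$ of rank $6(\deg D)^2$ yields $\rank_{\cO_S}(\cI \tens_B V) = 2 \deg D \cdot \ind(B \tens_F D)$. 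Analogously, $\rank_{\cO_S}(\cJ \tens_Q V') = 3 \deg D \cdot \ind(Q \tens_F D)$, using that $\cJ$ has rank $6$ and $Q \tens_F D$ has degree $2 \deg D$ over $L$. Dividing the GCD of these three generators by $\deg D$ yields case (i).

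Cases (ii)--(v) will follow by tracking the decomposition of $K$ and $L$ into field factors. When $K$ (or $L$) factors as a product, the central idempotents induce $B = \prod B_i$, $\cI = \bigoplus \cI_i$, $Q = \prod Q_j$, $\cJ = \bigoplus \cJ_j$; the same rank computation applied componentwise, using the ranks of $\cI_i, \cJ_j$ read off from the decomposition of the hexagon into $\Gamma$-orbits and the centers $K_i, L_j$ of the $B_i, Q_j$, will deliver the stated GCDs. Whenever $K$ or $L$ has an $F$-factor, the splitting hypothesis on $\cor_{K/F}(B)$ or $\cor_{L/F}(Q)$ forces the corresponding $B_i$ or $Q_j$ to be split, so its contribution has the form $c \cdot \deg D \cdot \ind D$ for an integer $c$ and is absorbed into $\deg D \cdot \ind D$ in the GCD; this explains why the coefficients $2$ and $3$ disappear from the formula in cases where $K$, resp.\ $L$, factors.

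The main technical obstacle will be justifying the rank identities $\rank_{\cO_S}(\cI \tens_B V) = 2 \deg D \cdot \ind(B \tens_F D)$ and its $\cJ$-analog with coefficient $3$, since these produce the numerical coefficients appearing in the formula. Once these are established, the case analysis reduces to combinatorial bookkeeping about the decomposition of the hexagon of lines under $\Gamma$ and which components of $B$ and $Q$ are forced to split.
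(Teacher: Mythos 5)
Your approach matches the paper's: both proofs run Lemma \ref{rank} with $X = S$ against the $K_0$-decomposition $K_0(S;D) \cong K_0(D) \oplus K_0(B\tens_F D) \oplus K_0(Q\tens_F D)$ furnished by the isomorphism $u: A \to S$ in $\cC$, and reduce to computing ranks of images of simple modules. Your derivation of the rank identities by tensoring with the whole algebra $B\tens_F D$ and dividing by the multiplicity $r$ of a simple summand is an algebraically equivalent rearrangement of the paper's direct count $\rank(M_B \tens_B \cI) = \dim_F(M_B)\rank(\cI)/\dim_F(B)$, and your numerical answers ($2\deg D\,\ind(B\tens_F D)$ and $3\deg D\,\ind(Q\tens_F D)$) are correct. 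So the backbone of the argument and case (i) are right.

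However, your explanation of the case analysis contains a genuine error. You claim that when $K$ or $L$ has an $F$-factor, the hypothesis that $\cor_{K/F}(B)$ or $\cor_{L/F}(Q)$ is split forces the component $B_i$ or $Q_j$ over that $F$-factor to be split. This is false: e.g.\ if $K = F\times F$ then $\cor_{K/F}(B) = B_1\tens_F B_2$ being split only gives $B_2 \cong B_1\op$, and indeed in case (ii) both $\ind(D\tens_F B_1)$ and $\ind(D\tens_F B_2)$ remain in the formula, so those components are not being absorbed. The two phenomena you are conflating are: (a) the coefficients on a given term change (from $2$ or $3$ to $1$, etc.) because $\cI$ and $\cJ$ decompose over $F$ into pieces of smaller rank matching the field factors of $K$ and $L$ --- this is the correct mechanism, which you do state, and it is what actually produces the coefficients in (ii)--(iv); and (b) an entire term is \emph{absorbed} into the GCD --- this happens for the \emph{other} algebra, not the factored one: when $K$ is not a field, $Q_K = Q\times Q$ being split forces \emph{$Q$} (not some $Q_j$) to be split, and its contribution $3\deg D\,\ind(D_L)$ is then a multiple of $\deg D\,\ind D$; symmetrically when $L$ is not a field, $B_L$ having $B$ as a factor forces $B$ split. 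The relevant hypothesis is the $B_L$/$Q_K$ splitting, not the corestriction splitting. Correcting this attribution is necessary to carry out cases (ii)--(v) rigorously.
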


\begin{remark}
In case ii., $Q = M_2(L)$ is necessarily split, as $K$ is not a
field. Then $\ind(D \tens_F M_2(L)) = \ind(D_L)$, and as $\ind(D)$
divides $[L:F]\ind(D_L) = 3\ind(D_L)$, the greatest common divisor
will not change if we remove the term $3\ind (D \tens_F Q)$.
Similarly, in cases iii., iv., and v., we can remove the term with
the split $B$ or $Q$ when computing greatest common divisors.
\end{remark}

\begin{proof}
As $u:A \ra S$ is an isomorphism in $\cC$, it defines an isomorphism
$K_0(u)$ from $K_0(A \tens_F D)$ to $K_0(S; D)$. Moreover, as $A = F
\times B \times Q$, $K_0(A \tens_F D) \cong K_0(D) \oplus K_0(B
\tens_F D) \oplus K_0(Q \tens_F D)$.

We will consider the case where $K$ and $L$ are fields. The proof
of the other cases are similar. As $D$, $B \tens_F D$, and $Q
\tens_F D$ are central simple algebras (with centers $F$, $K$, and
$L$, respectively), their $K_0$ groups are cyclic, generated by
the class of a simple module. Therefore by Lemma \ref{rank},
$\deg(D) \ind (D_{F(S)})$ will equal the greatest common divisor
of the ranks of the images of simple $D$, $B \tens_F D$ and $Q
\tens_F D$ modules under the image of $K_0(u): K_0 (A \tens_F D)
\ra K_0 (S;D)$.

So let $M_B$ be a simple $B \tens_F D$-module. Then $\dim_K (M_B)
= \deg (B \tens_F D)\ind(B \tens_F D)$, and thus
\begin{align*}
\rank (K_0(u)(M_B)) & = \rank (M_B \tens_B \cI) \\
& = \frac{\dim_F (M_B) \rank(\cI)}{\dim_F(B)} \\
& = \frac{\dim_K (M_B) \rank(\cI)}{\dim_K(B)} \\
& = \frac{\deg(B \tens_F D)\ind(B \tens_F D)\rank(\cI)}{\dim_K(B)} \\
& = 2 \deg(D)\ind (B \tens_F D).
\end{align*}
Similarly, if $M_Q$ (resp. $M_F$) is a simple $Q \tens_F D$-module
(resp. $D$-module), $\rank (K_0(u)(M_Q)) = 3\deg(D)\ind (D \tens_F
Q)$ (resp. $\rank(K_0(u)(M_F)) = \deg(D)\ind(D)$), and the result
follows.
\end{proof}

\end{document}